\documentclass[12pt]{article}
\usepackage{amssymb,amsthm,amsmath,amsfonts}
\usepackage{graphicx}
\usepackage{cite}
\usepackage{color}

\theoremstyle{definition}
\newtheorem{theorem}{Theorem}

\newtheorem{corollary}[theorem]{Corollary}
\newtheorem{proposition}[theorem]{Proposition}
\newtheorem{definition}[theorem]{Definition}

\newtheorem{notation}[theorem]{Notation}
\newtheorem{remark}[theorem]{Remark}
 
\newcommand{\bH}{{\mathbb{H}}}

\newcommand{\bR}{{\mathbb{R}}}

\DeclareMathOperator{\spn}{span}

\newcommand{\cA}{{\mathcal{A}}}

\begin{document}
 \title{Covariance property for a quaternionic fractional Cauchy-Riemann operator under affine transformation}
\small{
\author
{Isidro Paulino-Basurto$^{(1)}$, Jos\'e Oscar Gonz\'alez-Cervantes$^{(1)}$,\\ Juan Bory-Reyes$^{(2)}$, Hung Manh Nguyen$^{(3)\footnote{corresponding author}}$}
\vskip 1truecm
\date{\small $^{(1)}$ Departamento de Matem\'aticas, ESFM-Instituto Polit\'ecnico Nacional. 07338, Ciudad de M\'exico, M\'exico\\ Email: jogc200678@gmail.com, isidroo-@hotmail.com\\$^{(2)}$ {SEPI, ESIME-Zacatenco-Instituto Polit\'ecnico Nacional. 07338, Ciudad de M\'exico, M\'exico}\\Email: juanboryreyes@yahoo.com\\$^{(3)}$ {University of Transport and Communications, Hanoi, Vietnam}\\Email: hung.manh.nguyen@utc.edu.vn}}

\maketitle

\begin{abstract}
In this paper, we continue the development of the foundations of a quaternionic function theory that is associated to a fractional proportional Cauchy-Riemann type operator acting on quaternionic functions. We introduce a new class of fractional hyperholomorphic functions adapted to affine transformations of the domain, and study the corresponding quaternionic right module structure. In particular, we establish versions of the Stokes, Borel-Pompeiu and Cauchy formulas for this function theory. A covariance property of the fractional operator under affine transformations is proved, showing that the composition with an affine map, weighted by a suitable quaternionic factor, preserves the class of fractional hyperholomorphic functions. The result is shown to include the case considered previously by the authors, and extends their covariance property to the setting of affine transformations.
\end{abstract}
\noindent
\textbf{Keywords:} Fractional proportional Cauchy-Riemann operator, Stokes, Borel-Pompeiu and Cauchy formula, affine covariance property. 

\noindent
\textbf{AMS Subject Classification (2020):} 30G35

\section{Introduction}

In classical complex analysis, conformal mappings --- in particular
M\"{o}bius transformations --- play a fundamental role in both theory
and applications: they allow one to transform geometrically complex
domains into simpler ones while preserving the class of holomorphic
functions. This technique has found remarkable applications in fluid
dynamics, aerodynamics, and the analysis of stress fields near crack
tips in elastic materials~\cite{LN, LWP}. 
A crucial difference arises in quaternionic analysis: only M\"{o}bius transformations are conformal mappings in~$\mathbb{R}^{3}$ but they are
\emph{not} monogenic functions --- a generalization of holomorphic functions in higher dimensions. Moreover, the composition of a
monogenic function with a M\"{o}bius transformation does
not remain monogenic. This raises a fundamental question:
what structural equation does $f\circ T$ satisfy, and can one still
exploit M\"{o}bius transformations for domain transformation in
quaternionic function theory?

The covariance property of the $\psi$-Cauchy-Riemann generalized type operator, denoted by ${}^{\psi}\overline{\partial}$ is a natural extension of the chain rule, well-known in complex analysis. More precisely, it was shown in~\cite{GN1} that for any M\"{o}bius transformation~$T$ and any monogenic function~$f$, there always exists a structural set~$\psi'$, determined by~$T$, such that
$f\circ T$ is $\psi'$-hyperholomorphic. Equivalently, it states that for any M\"obius transformation $T$ {there} exist functions $\alpha$ and $\beta$, given in terms of $T$, such that  
$${}^{\psi}\overline{\partial }[\alpha f \circ T] = \beta \, {}^{\psi}\overline{\partial }[ f] \circ T,$$
for all functions $f$ of class $C^1$ in a domain in which the composition $f \circ T$ makes sense.   

This property of ${}^{\psi}\overline{\partial}$ induces an isomorphism  between the quaternionic modules $\textrm{ker} ({}^{\psi}\overline{\partial })$ associated to two conformally equivalent domains and this fact can be used to show the covariance invariant property of some quaternionic function spaces, such as Bergman, Dirichlet and Besov spaces induced by ${}^{\psi}\overline{\partial}$ (see \cite{GN2,GNL,S, SV1, SV2}).

In \cite{GB-1, GB-2, GPBN} the function theory induced by a family of fractional quaternionic Fueter,  Moisil-Teodorescu and $\psi$-Cauchy-Riemann generalized type operators has been established. In particular, the function  theory induced by a fractional proportional $\psi$-Cauchy-Riemann generalized type operator and some important  results such as Borel-Pompeiu, Stokes and Cauchy formulas were presented in \cite{GPBN}.

The present work introduces the  quaternionic function theory induced by a quaternionic fractional proportional $\psi$-Cauchy-Riemann generalized type operator under an affine transformation and proves its covariance property.

A natural and important question is whether the covariance
property of~${}^{\psi}\overline{\partial}$ extends to the fractional
proportional setting, where ${}^{\psi}\overline{\partial}$ is replaced
by the fractional proportional operator
${}_a^{\psi}\overline{\partial}^{\,\alpha,\sigma,\varphi}$
introduced in~\cite{GPBN}. The present work gives an
affirmative answer for affine transformations, which constitute an
important subclass of M\"{o}bius transformations, and establishes the
corresponding quaternionic function theory. This extends the classical
technique of domain transformation to the fractional proportional
setting, with potential applications to boundary value problems in
fractional quaternionic analysis.

The plan of the paper is as follows: Section \ref{preli} provides a brief introduction to some  basic concepts of fractional calculus, rudiments of quaternionic analysis and to the covariance property of the $\psi$-Cauchy-Riemann operator. {Concluding with the introduction of the function theory associated  with a  quaternionic fractional proportional operator. Meanwhile, Section \ref{MR} presents the  function theory induced by a  quaternionic fractional proportional operator associated to a Möbius transformation, an affine covariance property of the operator ${}_a^{\psi}\overline{\partial} ^{\alpha , \sigma , \varphi, T}$ and the behavior of the operator ${}_a^{\psi}\overline{\partial} ^{\alpha , \sigma , \varphi}$  under composition} with a conformal mapping.

\section{Preliminaries} \label{preli}

\subsection{Proportional fractional derivatives with respect to functions}

This subsection presents a brief summary of the basic concepts of some fractional proportional operators given in  \cite{JAAl, JAA, JUAB}.

A type of  proportional derivative  and integral are given  by 
\begin{align*} (D^{\rho}f )(t) = & (1-\rho)f (t) +  \rho f'(t), \\
({}_aI^{1-\rho} f) (t) = &\frac{1}{\rho}\int_
a^t  e^{ \frac{\rho-1}{\rho}(t-s)}f (s) ds, \quad \forall  f\in C^1(\mathbb R, \mathbb R), \nonumber  
 \end{align*}
where $\rho\in (0,1)$, respectively. In addition, the previous    proportional derivative of $f\in C^1(\mathbb R,\mathbb R)$ with respect to a function $\varphi \in C^1(\mathbb R, \mathbb R^+ )$, where $\mathbb R^+$ is the set of all real positive numbers,  is defined by  
\begin{align*} 
(D^{\rho,\varphi} f )(t) = (1-\rho)f(t) + \rho\frac{f'(t)}{\varphi'(t)}.
\end{align*}
On the other hand, given  $\alpha\in \mathbb C$ such that $0< \Re \alpha <1 $, then  the left and the right fractional  proportional integrals  with order $\alpha$  and  with respect to $\varphi$  are defined by  
\begin{align*}  
({}_a I^{\alpha,\rho,\varphi} f) (t) := &\frac{1}{\rho^\alpha \Gamma(\alpha)} \int_a^te^{\frac{\rho-1}{\rho} (\varphi(t)-\varphi(\tau)) }
(\varphi(t)- \varphi(\tau))
^{\alpha-1}f (\tau)\varphi'(\tau) d\tau , \nonumber \\
(I_b^{\alpha,\rho,\varphi} f)(t) :=& \frac{1}{\rho^\alpha \Gamma(\alpha)}
 \int_ t^b
e^{\frac{\rho-1}{\rho} (\varphi(\tau)- \varphi(t)) }
(\varphi (\tau)-  \varphi (t))
^{\alpha-1}f (\tau)\varphi'(\tau) d\tau, 
\end{align*}
respectively. Meanwhile, the left and the right fractional proportional derivatives with order $\alpha$ and with respect to $\varphi$  are
\begin{align}\label{DerFracPropRespecFuntL}
({}_aD^{\alpha, \rho,\varphi} f)(t) :=  D^{\rho,\varphi}
{}_aI^{1-\alpha,\rho,\varphi}f(t) \ \ \textrm{ and } \ \  
(D_b^{\alpha, \rho,\varphi} f)(t) := D^{\rho,\varphi}
 I_b^{1-\alpha,\rho,\varphi}f(t),
\end{align}
respectively.  
The fundamental theorems of these families of fractional differential  and  integrals operators are
\begin{align}\label{FundTheoFPW}
{}_aD^{\alpha,\rho,\varphi} \circ 
{}_aI^{\alpha,\rho,\varphi} f(t) = f(t) \quad \textrm{and} \quad
 D_b^{\alpha,\rho,\varphi} \circ 
 I_b^{\alpha,\rho,\varphi} f(t) = f(t).
\end{align}
If $\varphi$ is the identity function    and $\rho=1$ we see that  
\begin{align*}
({}_aD^{\alpha, 1,I} f)(t)  = &   \frac{\partial}{\partial t}{}_aI^{1-\alpha}f(t)=:
({}_aD^{\alpha} f)(t), \\
 (D_b^{\alpha, 1,I} f)(t)=  & (D_b^{\alpha} f)(t), 
\end{align*}
are the well-known left- and right- fractional derivative of $f$ of order $\alpha$  in the Riemann-Liouville sense.

\subsection{On $\psi$-Cauchy-Riemann operator calculus in $\mathbb R^3$}

A summary of basic  rudiments of quaternionic analysis and  the conformable property of the $\psi$-Cauchy-Riemann operators are presented in  this subsection.
 \\
  The skew-field of real quaternions,  $\mathbb H$, has the   standard orthonormal basis   $\{{\bf e_0}, {\bf e_1}, {\bf e_2}, {\bf e_3}\}$, where ${\bf e_0}$ is the multiplicative identity of $\bH$ and  the set of imaginary units $\{{\bf e_1}, {\bf e_2}, {\bf e_3}\}$ satisfy
$$ {\bf e_1}^{2}= {\bf e_2}^{2}= {\bf e_3}^{2}=-{\bf e_0},$$
$$ {\bf e_1}\, {\bf e_2}=- {\bf e_2}\,\  {\bf e_1}= {\bf e_3};\   {\bf e_2}\, {\bf e_3}=- {\bf e_3}\, {\bf e_2}= {\bf e_1};\  {\bf e_3}\, {\bf e_1}=- {\bf e_1}\, {\bf e_3}= {\bf e_2}.$$ 
  $ \mathbb H$ consists of elements $q=x_0 {\bf e_0}+x_{1} {\bf e_1}+x_{2} {\bf e_2}+x_{3} {\bf e_3}$,   
where $x_{k}\in \mathbb R$  for  $k= 0,1,2,3$. The sum is developed between coefficients of basic elements and the  non-commutative multiplication obeys  rules of $\{{\bf e_1}, {\bf e_2}, {\bf e_3}\}$.
\\
The real span of $\{\bf e_0, \bf e_1, \bf e_2\}$ is called the set of reduced quaternions and  forms a real subspace of $\mathbb H$ but not a sub algebra.
\\
The quaternionic conjugation  in $\mathbb H$  is given by 
$$q\rightarrow {\overline q}:=x_0 {\bf e_0}-x_{1}{\bf e_1}-x_{2}{\bf e_2}-x_{3}{\bf e_3}$$ 
and the norm is 
$$|q|  := \sqrt{ x^{2}_{0}+x^{2}_{1}+x^{2}_{2}+x^{2}_{3}}= \sqrt{ q\,{\overline q}}=\sqrt{{\overline q}\,q}.$$
The scalar product in $\mathbb H$ is given by 
$$\langle u, v\rangle:=\frac{1}{2}(\bar u v + \bar v u) \qquad \text{for all } u,v\in\mathbb{H}.$$
An ordered set $\psi:=\{\psi_0, \psi_1,\psi_2\}\subset \spn_{\mathbb R}\{\bf e_0, \bf e_1, \bf e_2\} $ is called a structural set if it satisfies the orthonormality condition:  
$$\psi_k{\overline \psi}_s+\psi_s{\overline\psi}_k=\langle \psi_k, \psi_s\rangle =2\delta_{k,s},$$ 
for $k,s=0,1,2$, where $\delta_{k,s}$ is the Kronecker symbol. 
Since $\psi_0,\psi_1,\psi_2$ have no $\mathbf{e_3}$-component, the unit vector $\mathbf{e_3}$ in
$\mathbb{H}$ is orthogonal to all of them. 

We consider $\psi_3$ a unit vector such that $\{ \psi_0,\psi_1,\psi_2,\psi_3\}$ is a orthonormal basis of $\mathbb{H}$ and $ \{\psi_1,\psi_2,\psi_3\} $ is cooriented with $\{e_1,e_2,e_3\}$.To simplify the notation, $\psi_0=1$  is usually used.
\\
Denote
  $$\mathcal A:=\spn_{\mathbb R}\{\bf \psi_0, \psi_1, \psi_2\}  \cong\mathbb R^3  .$$
The real three-dimensional vector space $\mathbb R^3$ will be embedded in $\mathbb H$ using  the mapping  $x= (x_0,x_1,x_2)\in \mathbb R^3 \mapsto x_0\psi_0 + x_1\psi_1+ x_2 \psi_2\in \mathcal A$.   
\\
Let $\Omega\subset \mathcal A$ be an open bounded domain whose  boundary $\partial \Omega$  is a smooth surface.  All $\mathbb H$-valued functions $f$ defined on $\Omega$ are represented  by  $f=\sum_{k=0}^3 f_k \psi_k$, where $f_k:\Omega\to \mathbb R$  for $ k= 0,1,2,3$. The function $f$ is said to be continuously differentiable (denote $f \in C^{1}(\Omega, \mathbb H)$) if every  $f_k$ has the same property.
 
 \begin{definition}
The left and the right-$\psi$-Cauchy-Riemann operators acting on $f \in C^1(\Omega,\mathbb H)$ is defined, respectively, as follows    
$${}^{{\psi}}\overline{\partial}f := \sum_{k=0}^2 \psi_k \partial_k f$$ 
and 
$${}^{{\psi}}\overline{\partial}_r f :=  \sum_{k=0}^2 \partial_k f \psi_k,$$ 
where  the partial differentiation operator   is abbreviated as $\partial_k f :=\displaystyle \frac{\partial f}{\partial x_k}$ for   $k=0,1,2$. In addition,   the three-dimensional Laplace operator $\Delta_{\mathbb R^3}$
  decomposes as
$${}^{{\psi}}\overline{\partial}\circ {}^{\overline{\psi}}\overline{\partial}={}^{\overline{\psi}}\overline{\partial} \circ {}^{{\psi}}\overline{\partial} = {}^{{\psi}}\overline{\partial}_r\circ {}^{\overline{\psi}}\overline{\partial}_r={}^{\overline{\psi}}\overline{\partial}_r \circ {}^{{\psi}}\overline{\partial}_r =\bigtriangleup_{\mathbb R^3}.$$
\end{definition}

\begin{definition}
A function $f \in C^1(\Omega,\mathbb H)$ is called left-$\psi$-hyperholomorphic  on $\Omega$ if ${}^{{\psi}}\overline{\partial}f=0$. In addition, if   ${}^{{\psi}}\overline{\partial}_r f=0$  on $\Omega$, then  $f$ is called   right-$\psi$-hyperholomorphic  on $\Omega$.
\end{definition} 

For the sake of completeness, we recall some definitions and results of  \cite{GPBN}.

\begin{notation}
The quaternionic differential form  $ {}^{\psi}\eta_{x}:= \sum_{k=0}^2 (-1)^k \psi_k d\hat{x}_k$ represents the $2$-dimensional area differential in $\bR^3$, where  $d\hat{x}_i  = dx_0 \wedge dx_1 \wedge dx_2 $ omitting  $dx_i$ for $i=0,1,2$. In addition, $|\eta_{x}| = dS_x$ is the differential element of $2$-dimensional surface.  The function: 
\[K_{\psi}(x) = \dfrac{\overline{x}}{4\pi |x|^{3}}, \quad \forall x\in \mathcal{A} \setminus \{0\},\]
where $\overline{x} = x_0 \psi_0- x_1\psi_1- x_2\psi_2$, is the ${\psi}$-hyperholomorphic Cauchy-Riemann kernel whose basic properties are $K_{\psi}\in C^{\infty} (\mathbb R^3\setminus \{ 0\})$ and  $ {}^{\psi}\overline{\partial} K_{\psi} = {}^{\psi}\overline{\partial}_{r} K_{\psi} =  0 $ on $\mathbb  R^3\setminus \{  0\}$. See \cite{GN1, GN2, S, SV1, SV2}. 
\end{notation}

\begin{proposition}\label{SBPF} 
If $f,g  \in C^1(\Omega,\mathbb H)$, then
\begin{align*}
 \int_{\partial \Omega} g{}^{\psi}\eta_x f &=  \int_{\Omega} \left[g({}^{\psi}\overline{\partial}  f)  + ({}^{\psi}\overline{\partial}_r  g) f\right]dx,
\end{align*}
\begin{align*}
&\int_{\partial\Omega }\left[ K_{\psi}(\tau -x){}^{\psi}\eta_{\tau} f(\tau)+g(\tau) {}^{\psi} \eta_{\tau} K_{\psi}(\tau -x)\right] \nonumber \\
&-\int_{\Omega }\left[ K_{\psi}(\tau-x) ({}^{\psi}\overline{\partial}f)(\tau)+({}^{\psi}\overline{\partial}_{r}g)(\tau) K_{\psi}(\tau-x)\right] d\tau \nonumber  \\
=&
\begin{cases}
f(x)+g(x), & x\in\Omega 
\\
0, & x\in\cA \setminus\overline{\Omega}.
\end{cases}
\end{align*} 
\end{proposition}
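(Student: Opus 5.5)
The plan is to derive both identities from the classical Gauss divergence theorem, applied componentwise to suitable real vector fields, and then to use the properties of $K_\psi$ recorded in the preceding Notation to get the Borel--Pompeiu formula.

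\emph{Stokes formula.} Consider the $\mathbb H$-valued $2$-form $\omega := g\,{}^{\psi}\eta_x f = \sum_{k=0}^2 (-1)^k g\psi_k f\, d\hat x_k$. Its exterior derivative is
$$d\omega=\sum_{k=0}^2 \partial_k (g\psi_k f)\,dx_0\wedge dx_1\wedge dx_2,$$
because $d(d\hat x_k)$-type terms vanish and $dx_k\wedge d\hat x_k=(-1)^k dx$. The Leibniz rule then gives
$$\partial_k(g\psi_k f)=(\partial_k g)\psi_k f+g\psi_k(\partial_k f),$$
since the $\psi_k$ are constant. Summing over $k$ yields $({}^{\psi}\overline{\partial}_r g)f+g({}^{\psi}\overline{\partial} f)$. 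Applying Stokes' theorem to each of the four real components (in the basis $\psi_0,\dots,\psi_3$, with real-linear multiplication) gives the first identity. The only care needed is the orientation sign convention in $d\hat x_k$, which matches $(-1)^k$.

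\emph{Borel--Pompeiu formula.} Fix $x\in\Omega$ and remove a small ball $B_\varepsilon(x)$. On $\Omega_\varepsilon=\Omega\setminus \overline{B_\varepsilon(x)}$, apply the Stokes formula twice. First take the pair $(K_\psi(\cdot-x), f)$, using ${}^{\psi}\overline{\partial}_r K_\psi=0$ off the singularity. Then take the pair $(g, K_\psi(\cdot -x))$, using ${}^{\psi}\overline{\partial}K_\psi=0$. Adding the two results gives
$$\int_{\partial\Omega}[\cdots]-\int_{\partial B_\varepsilon}[\cdots]=\int_{\Omega_\varepsilon}\bigl[K_\psi({}^{\psi}\overline{\partial}f)+({}^{\psi}\overline{\partial}_r g)K_\psi\bigr]d\tau .$$
Since $K_\psi$ is $O(|\tau-x|^{-2})$, it is integrable in $\mathbb R^3$, so the volume integrals converge as $\varepsilon\to0$.

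\emph{Main obstacle: the small-sphere limit.} On $\partial B_\varepsilon(x)$, write $\tau-x=\varepsilon\nu$, where $\nu$ is the unit outer normal, identified with an element of $\mathcal A$. The form ${}^{\psi}\eta_\tau$ restricts to $\nu\,dS$; here one must check that the $(-1)^k$ convention makes the quaternion $\sum\psi_k n_k$ appear, with $\psi_0=1$. Then
$$K_\psi(\tau-x)\,{}^{\psi}\eta_\tau=\frac{\overline{\nu}\,\nu}{4\pi\varepsilon^2}\,dS=\frac{dS}{4\pi\varepsilon^2}.$$
The identity $\overline\nu\nu=|\nu|^2=1$ uses the structural-set orthonormality $\psi_k\overline\psi_s+\psi_s\overline\psi_k=2\delta_{ks}$. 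Similarly, ${}^{\psi}\eta_\tau K_\psi=\nu\overline\nu\, dS/(4\pi\varepsilon^2)=dS/(4\pi\varepsilon^2)$. By continuity of $f$ and $g$, the sphere integrals therefore tend to $f(x)+g(x)$, which gives the first case.

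For $x\in\mathcal A\setminus\overline\Omega$, the kernel is smooth on $\overline\Omega$. The Stokes formula applies directly, so the boundary integral minus the volume integral equals $0$. The delicate point throughout is keeping track of the noncommutative order in $g\,\eta\, f$ and the sign/orientation identification of ${}^{\psi}\eta$ with $\nu\,dS$.
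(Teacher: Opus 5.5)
Your proof is correct. The computation $d\bigl(g\,{}^{\psi}\eta_x f\bigr)=\bigl[({}^{\psi}\overline{\partial}_r g)f+g\,({}^{\psi}\overline{\partial} f)\bigr]dx$ is right. So is the excision of $B_\varepsilon(x)$, which uses ${}^{\psi}\overline{\partial}_r K_\psi=0$ for the pair $(K_\psi,f)$ and ${}^{\psi}\overline{\partial}K_\psi=0$ for the pair $(g,K_\psi)$. The identification ${}^{\psi}\eta_\tau=\nu\,dS$ on the sphere also holds, and it gives $K_\psi(\tau-x)\,{}^{\psi}\eta_\tau={}^{\psi}\eta_\tau\,K_\psi(\tau-x)=dS/(4\pi\varepsilon^2)$, where $\psi_0=1$ makes $\overline{x}$ in $K_\psi$ the quaternionic conjugate.

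The paper itself gives no proof of this proposition; it only recalls it from \cite{GPBN} and the classical $\psi$-hyperholomorphic theory \cite{SV1}. Your argument is the standard proof used in that literature.

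You should state one caveat explicitly, which the statement also glosses over. Stokes' theorem and the convergence of the volume integrals need $f,g$ to be $C^1$ up to the boundary, e.g.\ $f,g\in C^1(\overline{\Omega},\mathbb H)$; membership in $C^1(\Omega,\mathbb H)$ alone is not enough.
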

\begin{definition} The basic Möbius transformations on $\cA$ are the following: 
\begin{itemize}
\item Translation, $T_1(x): =   x+ u $, where $u\in \cA$, and  $T(\infty)=\infty$.
 
\item Dilation,  $T_2(x) := \lambda x$, where $\lambda >0$, and  $T(\infty)=\infty$.

\item Rotation, $T_3(x) := r  x \psi_3 \overline{r} \, \overline{\psi}_3 $, where $r\in \mathbb H$ with $\|r\|=1$,   and  $T(\infty)=\infty$.
One can verify that $T_3$ maps $\mathcal{A}$ to itself, preserves the norm, and its Jacobian is an orthogonal matrix with determinant~$1$; hence $T_3$ is indeed a rotation of $\mathcal{A}\cong\mathbb{R}^3$. The representation used here produces the same family of rotations as in~\cite{GN1}, but in the opposite orientation.

\item Inversion, $T_4 (x):= \dfrac{\overline{x}}{|x|^2}$, for all $x\in\cA\setminus\{ 0 \}$,   $T(0)=\infty$  and  $T(\infty)=0$.
\end{itemize}
\end{definition}

The M\"obius transformations on $\cA$, or {conformal} mapping  on $\cA$, in general are generated by the composition of a finite number of the previous four  transformations. Indeed, 
any  M\"obius transformation $T$ is represented by    
  \begin{align}\label{MoebiusR3} 
T(x)= (ax+b)(cx+d)^{-1} ,   
\end{align}   
where   
  \[
  \left\{   
 \begin{array}{ll}  
    bd^{-1}  \in \cA \quad \text{and} \quad \dfrac{d^{-1}}{|d^{-1}|}  =  \psi_3 \dfrac{\overline{a}}{|a|} \overline{\psi}_3
   ,  & \textrm{   if  $c=0$,}  \\
        \dfrac{(b - ac^{-1}d)^{-1}}{|(b - ac^{-1}d)^{-1}|}  =  \psi_3 \dfrac{\overline{c}}{|c|}\overline{\psi}_3  ,  \quad \text{and} \quad ac^{-1}, d(b - ac^{-1}d)^{-1} \in\cA , 
     & \textrm{   if  $c\neq0$,} 
  \end{array}
  \right.
  \]
  
\begin{remark} An affine transformation on $\cA$  is the composition of a finite number of translations, dilations and rotations.  In addition, any affine transformation  on $\cA$ can be expressed as follows
  \begin{align}\label{AffineR3} 
T(x)= (ax+b) d^{-1} ,   
\end{align}  
 where   
  $ 
    bd^{-1}  \in \cA \quad \text{and} \quad \dfrac{d^{-1}}{|d^{-1}|}  =  \psi_3 \dfrac{\overline{a}}{|a|} \overline{\psi}_3
  $.
\end{remark}

\begin{definition}
Two domains $ \Omega,\Xi \subset \cA$  are called conformally equivalent domains, if there exists a M\"obius transformation
  $T$ such that $T(\Xi)=  \Omega $. In particular, if  $T$ is an affine transformation, then $\Omega$ and $\Xi$ are called affine equivalent domains. 
\end{definition}

\begin{proposition}\label{conformableproperyofD}
Let $\Omega, \Xi \subset \cA $ be two conformally equivalent domains and  let $T$ be  a Möbius transformation   given by 
 \eqref{MoebiusR3} such that $T(\Omega) =\Xi$.
 Define 
 \begin{align*}
 A_T(x):= &\left\{  \begin{array}{ll}  \psi_3 \dfrac{\overline{a}}{|a|}\overline{ \psi_3} , & \ \ \textrm{if} \ \ c=0, \\  
  \psi_3 \dfrac{\overline{c}}{|c|}\overline{\psi_3}  \dfrac{\overline{cxv^{-1} + dv^{-1}}}{|cxv^{-1} + dv^{-1}|^3} , & \ \ \textrm{if} \ \ c\neq 0,  \end{array} \right. \\
B_T(x):= &\left\{  \begin{array}{ll}   |d^{-1}|\overline{a}  , & \ \ \textrm{if} \ \ c=0 ,\\
 - |v^{-1}| \overline{c} \dfrac{cxv^{-1} + dv^{-1}}{|cxv^{-1} + dv^{-1}|^5} , & \ \ \textrm{if} \ \ c\neq 0,  \end{array} \right.
  \end{align*}
 where $v:= b  -  ac^{-1}d$. Then 
  \begin{align}
  \label{invarianCauchy}
 {}^{{\psi}}\overline{\partial} [A_{T} f\circ T] = 
 B_{T}  \, {}^{{\psi}}\overline{\partial}[f]\circ T, \quad  \textrm{on } \ \ \Omega, 
  \end{align}
    for  all $f\in C^{1}(\Xi,\mathbb H)$. 
\end{proposition}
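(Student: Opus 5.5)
We prove \eqref{invarianCauchy} in two stages: first for the four basic M\"obius transformations (translation, dilation, rotation, inversion), and then for general $T$ by composing them. The chain rule for the left operator is the tool throughout. If $y=T(x)$ and $f\in C^1(\Xi,\mathbb H)$, then
$$\partial_k (f\circ T)(x)=\sum_{j}\partial_{y_j}f(T(x))\,\partial_k T_j(x).$$
For ${}^{\psi}\overline{\partial}[A f\circ T]=\sum_k \psi_k\partial_k(A\, f\circ T)$ we split the computation into two parts:
$$({}^{\psi}\overline{\partial}A)(f\circ T)\qquad\text{and}\qquad \sum_{k,j}\psi_k A\,\partial_kT_j\,(\partial_j f)\circ T .$$
The goal is to make the first part vanish and to rewrite the second as $B\sum_j\psi_j(\partial_jf)\circ T$.

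\textbf{Affine case ($c=0$).} Here $A_T$ is a constant unit quaternion, so the first part is zero. Translations and dilations are immediate: for $T(x)=\lambda x+u$ we get $\partial_kT_j=\lambda\delta_{kj}$, and with $A=1$, $B=\lambda$ the identity holds. The substance lies in the rotation $T_3(x)=r x\psi_3\bar r\bar\psi_3$. Since $x\mapsto rx\psi_3\bar r\bar\psi_3$ is real linear, we have
$$\sum_j (\partial_k T_j)\psi_j=r\psi_k\psi_3\bar r\bar\psi_3 .$$
We therefore need the intertwining identity
$$\sum_k\psi_k\, \bar s\, (\partial_kT_j)=\bar s'\,\psi_j,\qquad j=0,1,2,$$
with $\bar s=\psi_3\bar r\bar\psi_3$. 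We verify it by conjugating by $\psi_3$, using that $q\mapsto\psi_3 q\bar\psi_3$ is an algebra automorphism. This reduces it to the standard fact that $\sum_k \bar e_k\, w\,\partial_k$ transforms under the $SO(3)$ action $x\mapsto rx\bar r$ on the orthogonal complement of $\psi_3$.

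The general affine form \eqref{AffineR3} is then handled directly. We write
$$T(x)=(ax+b)d^{-1}=|d^{-1}|\,|a|\;\tfrac{a}{|a|}\,x\,\tfrac{d^{-1}}{|d^{-1}|}+bd^{-1},$$
where the constraint on $d^{-1}/|d^{-1}|$ makes this a dilation composed with the rotation with $r=a/|a|$, plus a translation. The constants $A_T=\psi_3\frac{\bar a}{|a|}\bar\psi_3$ and $B_T=|d^{-1}|\bar a$ then come out of the rotation and dilation factors.

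\textbf{M\"obius case ($c\neq0$).} We decompose
$$T(x)=ac^{-1}+v\,(cx+d)^{-1},\qquad v=b-ac^{-1}d,$$
which is equivalent to the composition of an affine map $x\mapsto cxv^{-1}+dv^{-1}$ (up to the normalization imposed by the constraints), the inversion, and another affine map. For the inversion $T_4(x)=\bar x/|x|^2$ we check the classical identity
$${}^{\psi}\overline{\partial}\Big[\frac{\bar x}{|x|^3}\,f(T_4 x)\Big]=-\frac{x}{|x|^5}\,({}^{\psi}\overline{\partial}f)(T_4x).$$
This uses two facts: ${}^{\psi}\overline{\partial}(\bar x/|x|^3)=0$, which is the hyperholomorphy of $K_\psi$, and the Jacobian of $T_4$, which is $|x|^{-2}$ times a reflection. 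Finally we establish the cocycle property: if the identity holds for $T$ with $(A_T,B_T)$ and for $S$ with $(A_S,B_S)$, then it holds for $S\circ T$ with
$$A=A_T\,(A_S\circ T),\qquad B=B_T\,(B_S\circ T).$$
This follows by applying the $T$-identity to the function $g=A_S\, f\circ S$.

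\textbf{Main obstacle.} We expect the hardest step to be the bookkeeping that shows the composed factors equal exactly the displayed $A_T$ and $B_T$, in particular tracking $\psi_3$-conjugations and the order of noncommutative factors in the rotation identity. We would first verify the rotation identity on the basis vectors $\psi_j$ and only then assemble the composition.
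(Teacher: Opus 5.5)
The paper gives no proof of this proposition. It is recalled from \cite{GN1} with renormalized weights, so there is no in-paper argument to compare against. Your generators-plus-cocycle argument is the standard route, and it is correct: carried through, it reproduces exactly the displayed $A_T$ and $B_T$. The bookkeeping you flag as the main obstacle is short. For $c\neq0$, the inner map $S(x)=cxv^{-1}+dv^{-1}$ is literally of the form \eqref{AffineR3} with $(a,b,d)$ replaced by $(c,d,v)$. The constraints imposed for $c\neq0$ say precisely that this triple satisfies the affine constraints, plus $ac^{-1}\in\cA$ for the outer translation, so no extra normalization enters. Hence $A_S=\psi_3\frac{\overline c}{|c|}\overline{\psi_3}$ and $B_S=|v^{-1}|\,\overline c$. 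With $T=\tau_{ac^{-1}}\circ T_4\circ S$ and your inversion weights $\overline w/|w|^3$ and $-w/|w|^5$, the cocycle rule gives $A_T=A_S\,(A_{T_4}\circ S)$ and $B_T=B_S\,(B_{T_4}\circ S)$. These are the stated formulas verbatim; the translation contributes trivial factors. Because the factorization is explicit, you also never need the abstract fact that M\"obius maps are generated by the four basic ones.

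Two sub-steps are misdescribed, though both are easily repaired.

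\textbf{Rotation.} The map $x\mapsto rx\overline r$ fixes the real axis and rotates $\mathrm{Im}\,\mathbb{H}$, so in general it does not preserve $\cA=\psi_3^{\perp}$. Conjugating $R(x)=rx\psi_3\overline r\,\overline{\psi_3}$ by $\psi_3$ does not turn $R$ into that map. The correct intertwiner is right multiplication by $\psi_3$: $R(x)\psi_3=r(x\psi_3)\overline r$, and $x\mapsto x\psi_3$ maps $\cA$ isometrically onto $\mathrm{Im}\,\mathbb{H}$. A simpler route: $R$ is an isometry of $\cA$, so its matrix in the orthonormal basis $\psi_0,\psi_1,\psi_2$ is orthogonal. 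Then $\sum_k(\partial_kR_j)\psi_k=R^{-1}(\psi_j)=\overline r\,\psi_j\psi_3 r\overline{\psi_3}$, and right multiplication by $A=\psi_3\overline r\,\overline{\psi_3}$ gives $\overline r\,\psi_j$. So $B=\overline r$, and after the dilation by $|a|\,|d^{-1}|$ this becomes $|d^{-1}|\overline a$.

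\textbf{Inversion.} The Jacobian of $T_4$ is $|x|^{-2}$ times $\mathrm{diag}(1,-1,-1)$ composed with a reflection, not a reflection. The identity you actually need is $\sum_k(\partial_k(T_4)_j)\psi_k=-|x|^{-4}x\psi_jx$. It follows from $q-2\langle q,n\rangle n=-n\overline q n$ for unit $n$, together with $\overline{\psi_j}$ times the sign of the $j$-th component of $\overline x$ being $\psi_j$. Right multiplication by $\overline x/|x|^3$ then yields $-x\psi_j/|x|^5$. Both this identity and ${}^{\psi}\overline{\partial}(\overline x/|x|^3)=0$ use $\psi_0=1$, so that $x_0\psi_0-x_1\psi_1-x_2\psi_2$ is the quaternionic conjugate.
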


From~\eqref{invarianCauchy}, if $f$ is $\psi$-monogenic on $\Xi$, then $A_T\,f\circ T$ is also $\psi$-monogenic, since the right-hand
side vanishes. The factor $A_T$ thus plays the role of a \emph{conformal weight factor}, as already noted in~\cite{GN1}, where,
for $c\neq0$, it is given by $$ \frac{\overline{cx+d}}{|cx+d|^3}.$$
The difference with this expression lies in the constant $\dfrac{\overline{v^{-1}}}{|v^{-1}|^3}$ and the explicit rotation factor
$\psi_3\dfrac{\bar{c}}{|c|}\bar{\psi}_3$.

\subsection{Fractional proportional $\psi$-Cauchy-Riemann operators}

This subsection shows concepts and results demonstrated in   \cite{GPBN}. 

\begin{definition}\label{def000}
Given $a:=\sum_{k=0}^2\psi_k a_k$ and $b:=\sum_{k=0}^2\psi_k b_k \in\mathbb R^3$ such that $a_k< b_k$, for   $k=0,1,2$. Denote 
$${J_a^b }:=  \left\{  \sum_{k=0}^2\psi_k x_k \in \mathbb R^3 \ \mid \ a_k< x_k < b_k, \  \ k=0,1,2\right\}.$$ 
We temporarily fix the element $y= \sum_{k=0}^2\psi_k y_k \in J_a^b$.
\\
Given  $\sigma:=\sum_{k=0}^2\psi_k \sigma_k $ and  ${\alpha}:= (\alpha_0, \alpha_1,\alpha_2), \ {\beta}:=(\beta_0, \beta_1, \beta_2) \in \mathbb C^3$ such that 
$0< \Re \alpha_k, \Re\beta_k ,\sigma_k < 1$ for $k=0,1,2$.
\end{definition}
 
 \begin{definition}\label{integraoper}
Let $\varphi\in C^1(\overline{J_a^b},\mathbb R)$ be such that $\displaystyle\frac{\partial }{\partial x_i} \varphi(y_0, \dots, x_i, \dots y_2) > 0$ for all $x_i \in [a_i,b_i]$ and $i=0,1,2$. 
Denote  $$D\varphi(x,y):=\sum_{i=0}^2\displaystyle\frac{\partial }{\partial x_i} \varphi(y_0, \dots, x_i, \dots y_2), \quad \forall x\in \overline{J_a^b}.$$ 
The real linear space $AC^1(J_a^b,\mathbb H)$ consists of all functions ${f}=\sum_{i=0}^3\psi_i f_i$, where $f_i: J_a^b\to \mathbb R$ satisfies that the mapping   
$$x_j \mapsto f_i(y_0,\dots,x_j,\dots, y_2)$$  
belongs to $AC^1((a_j, b_j), \mathbb R)$ for all $y\in J_a^b$, $ j=0,1,2,$ and $ i=0,1,2,3$.  
\\ 
The left fractional proportional integral type operator of $ f \in  AC^1(J_a^b,\mathbb H)  $ with respect to $\varphi$ with order $\alpha$ and proportion   $\sigma$  is defined by
\begin{align*}
({}_a {  I}^{\alpha,\sigma,\varphi} f) (x,y): = &\sum_{i=0}^2 ({}_{a_i} I^{\alpha_i,\sigma_i,\varphi_i} f) (y_0, \dots, x_i ,\dots, y_2), 
\end{align*}
where $\varphi_i(x_i):= \varphi(y_0, \dots, x_i ,\dots, y_2)$ for all $x_i \in [a_i,b_i]$ for $i=0,1,2$. \\
The right fractional proportional integral type operator acts on $f$ as follows:  
  \begin{align*}
 ( {I}_b^{\alpha,\sigma,\varphi} f) (x,y) :=  \sum _{i=0}^2 (  I_{b_i}^{\alpha_i,\sigma_i,\varphi_i} f)(y_0, \dots, x_i ,\dots, y_2). 
 \end{align*}
\end{definition}

\begin{definition}
The quaternionic  left  and   right   fractional proportional $\psi$-Cauchy-Riemann type operators  with respect to  $\varphi$ with order $\alpha$ and proportion   $\sigma$ are given by    
\begin{align*}
({}^{\psi}_a{\overline{\partial}}^{\alpha, \sigma, \varphi }f)(x,y):= & (1-\sigma)  ({}_a{  I}^{1-\alpha, \sigma, \varphi}f)(x,y)  +
 \sigma \frac{{}^{\psi}\overline{\partial} ({}_a{I}^{1-\alpha,  \sigma, \varphi}  f )  (x,y)}{ D\varphi(x,y) },\\
 ( {}^{\psi} \overline{\partial}_b^{\alpha, \sigma, \varphi }   f)  (x,y)  
:=  &
  (1-\sigma)  ( {I}_b^{1-\alpha,  \sigma, \varphi}  f )  (x,y) +
     \sigma \frac{ {}^{\psi}\overline{\partial} ({I}_b^{1-\alpha,  \sigma, \varphi}  f )  (x,y)}{ D\varphi(x,y) } ,   
\end{align*} 
for all $ f \in  AC^1(J_a^b,\mathbb H)  $, respectively, where $1-\alpha := ( 1-\alpha_0, 1-\alpha_1, 1-\alpha_2 )\in \mathbb C^3 $. The partial derivatives of ${}^{\psi}\overline{\partial}$ are computed  with respect to real components of $x$.   
\\
The right versions of the previous operators are the following: 
\begin{align*}
 ( {}^{\psi}_a\overline{\partial}_r^{\alpha, \sigma, \varphi } f)  (x,y)  =  & 
   ({}_a{ I}^{1-\alpha,  \sigma, \varphi}  f )  (x,y)   (1-\sigma)   +
\frac{    {}^{\psi}\overline{\partial}_r ({}_a{I}^{1-\alpha,  \sigma, \varphi}  f )  (x,q)  }{ D\varphi(x,y) }  \sigma, \\
 (  {}^{\psi}\overline{\partial}_{r,b}^{\alpha, \sigma, \varphi }    f )  (x,y)  =   & 
    ( { I}_b^{1-\alpha,  \sigma, \varphi}  f )  (x,y)   (1-\sigma) +
\frac{     {}^{\psi}\overline{\partial}_r ({ I}_b^{1-\alpha,  \sigma, \varphi}  f )  (x,y) 
 }{ D\varphi(x,y) }\sigma,  
\end{align*}
respectively.
\end{definition}

\begin{definition} 
The quaternionic right module of 
left-$\alpha$-$\varphi$-fractional $\sigma$-proportional 
\\
$\psi$-hyperholomorphic functions on  ${J_a^b }$  is denoted by   
${}_a^{\psi}{\mathcal M}^{\alpha,\sigma, \varphi} (J_a^b)$  and consists of functions  $f \in AC^1(J_a^b,\mathbb H)$   such that $x\mapsto ({}_a{ I}^{\alpha,\sigma, \varphi}f)(x,y)$ belong  to $ C^1(\overline{J_a^b}, \mathbb H)$ for all $y\in J_a^b$ and 
$$({}^{\psi}_a{\overline{\partial}}^{\alpha,\sigma, \varphi}  f )(x,y)  =0,$$  
for all $x\in J_a^b$.  
\\
Similarly, the quaternionic left module ${}_a^{\psi}{\mathcal M}_r^{\alpha,\sigma, \varphi} (J_a^b)$ consists of functions $f \in AC^1(J_a^b,\mathbb H)$ such that $x\mapsto ({}_a{ I}^{\alpha,\sigma, \varphi}f)(x,y)$ belong  to $C^1(\overline{J_a^b}, \mathbb H)$ for all $y\in J_a^b$ and 
$$({}^{\psi}_a{\overline{\partial}}_r^{\alpha,\sigma, \varphi}f)(x,y) = 0,$$  
for all $x\in J_a^b$.  In addition, $f$  is called  right-$\alpha$-$\varphi$-fractional $\sigma$-proportional $\psi$-hyperholomorphic function on ${J_a^b}$ (r-$\alpha$-$\varphi$-$\sigma$-$\psi$-hyperholomorphic function on ${J_a^b}$, for short).
\end{definition}

\section{Main results}\label{MR}
Given the $\psi$-Cauchy-Riemann type operator ${}^\psi \overline{\partial}$ and $\sigma$ according to Definition \ref{def000}, 
as we will use some results from \cite{GPBN}, we will also assume        that there exist $\lambda_0, \lambda_1, \lambda_2 \in C_1 (\overline{J_a^b}, \bR)$   such that

\begin{align}\label{equaHyp}
   {}^{\psi} \overline{\partial} \left[ \sum_{k=0}^2 \lambda_k (\xi) \right]   =   \sum_{j=0}^2 \psi_j \dfrac{\partial}{\partial \xi_j} \sum_{k=0}^2 \lambda_k (\xi)   =   D \varphi (\xi, \zeta ) \sigma^{-1}(1-\sigma )
   ,
   \end{align}
  as assumed in   paper \cite{GPBN}.
\begin{remark}    
An important consequence of the identity \eqref{equaHyp} is 
    \begin{align} \label{DirectComputations}
   {}^{\psi}  \overline{\partial}_\xi \left[  e^{\sum_{k=0}^2 \lambda_k(\xi)} ({}_aI^{1-\alpha , \sigma , \varphi} f)(\xi,\zeta ) \right]   &=   D {\varphi} (\xi,\zeta) \sigma^{-1} {}_a^{\psi}\overline{\partial}_{\xi}^{\alpha , \sigma , \varphi} [f] (\xi, \zeta ) e^{ \sum_{k=0}^2 \lambda_k (\xi)},
    \\
   {}^{\psi}  \overline{\partial}_{r,\xi} \left[  e^{\sum_{k=0}^2 \lambda_k(\xi)} ({}_aI^{1-\alpha , \sigma , \varphi} f)(\xi,\zeta ) \right]   &=   {}_a^{\psi}\overline{\partial}_{r,\xi}^{\alpha , \sigma , \varphi} [f] (\xi, \zeta ) \sigma^{-1} D {\varphi} (\xi,\zeta)   e^{ \sum_{k=0}^2 \lambda_k (\xi) }, \nonumber
    \end{align}
    for all $f\in AC^1(J_a^b,\mathbb H)$, which are obtained  by direct computations (see \cite{GPBN}). 
\end{remark}

In the classical setting, the covariance identity~\eqref{invarianCauchy} expresses a clean relationship between ${}^{\psi}\overline{\partial}$ and
composition with $T$ due to the simple chain rule: $\partial_x[h(T(x))]=T'(x)\cdot h'(T(x))$. In the fractional proportional setting, the fractional integral ${}_aI^{1-\alpha,\sigma,\varphi}$ is not directly compatible with composition with $T$: the coordinate-wise structure of ${}_aI^{1-\alpha,\sigma,\varphi}$ interacts with the mixing of coordinates by $T$ in a way that does not reduce to a simple multiplicative factor. It is therefore necessary to introduce a new operator that incorporates $T$ directly into its definition.

\begin{definition}\label{DEF01}
Let $T$ be  a  Möbius transformation on $\cA$  given by  \eqref{MoebiusR3} and let $\Xi\subset \cA$ be such that  $T(\Xi)=J_a^b$. The quaternionic  left    fractional proportional $\psi$-Cauchy-Riemann type operators  with respect to  $\varphi$ with order $\alpha$, proportion   $\sigma$ and associated to $T$ is defined by 
\begin{align*} & {}_a^{\psi}\overline{\partial}_{x}^{\alpha , \sigma , \varphi, T} [g ] (x,y)  
\\
:=& 
    {}^{\psi} \overline{\partial}_x \left[\sum_{k=0}^2 \lambda_k \circ T(x)\right] {}_a I^{1-\alpha ,\sigma ,\varphi }[g]( T(x), T (y))   +   {}^{\psi}\overline{\partial}_x \bigg( {}_a I^{1-\alpha ,\sigma ,\varphi }[g ]( T(x), T (y))  \bigg),
\end{align*}
for all $g\in AC^{1}(J_a^b,\mathbb H)$. \end{definition}

\begin{remark}
By Definition \ref{DEF01} for $T=I$ (the identity mapping), ${}_a^{\psi}\overline{\partial} ^{\alpha , \sigma , \varphi, I}$ is directly related to 
${}_a^{\psi}\overline{\partial}^{\alpha , \sigma , \varphi}$  as follows
$${}_a^{\psi}\overline{\partial}^{\alpha , \sigma , \varphi, I}  = D\varphi (\xi, \zeta) \sigma ^{-1}  \  {}_a^{\psi}\overline{\partial} ^{\alpha , \sigma , \varphi}.$$
Moreover, the compositions ${}_a^{\psi}\overline{\partial}^{\alpha , \sigma , \varphi, T}  \circ  \overline{{}_a^{\psi}\overline{\partial} ^{\alpha , \sigma , \varphi, T}}$ and $\overline{{}_a^{\psi}\overline{\partial}^{\alpha , \sigma , \varphi, T}} \circ {}_a^{\psi}\overline{\partial} ^{\alpha , \sigma , \varphi, T}$  are extensions of the Laplace operator in a sense of the  quaternionic fractional analysis.
\end{remark}
 
The next proposition relates ${}_a^{\psi}\overline{\partial} ^{\alpha , \sigma , \varphi, T}$ with  ${}_a^{\psi}\overline{\partial}$. 

\begin{proposition}\label{prop1t}
Let $T$ be  a  Möbius transformation on $\cA$  given by  \eqref{MoebiusR3} and let $\Xi\subset \cA$ be such that  $T(\Xi)=J_a^b$. 
Then 
\begin{align*}
   {}^{\psi}\overline{\partial}_x \left\{   e^{\sum_{k=0}^2 \lambda_k \circ T(x)} {}_a I^{1-\alpha ,\sigma ,\varphi }[g]( T(x), T( y)   )   \right\} =  e^{ \sum_{k=0}^2 \lambda_{k}\circ T(x)  } \   {}_a^{\psi}\overline{\partial}_{x}^{\alpha , \sigma , \varphi, T}[g]( x,y   ) ,
\end{align*}
 for all $g\in C^1(J_a^b, \mathbb H) $.
\end{proposition}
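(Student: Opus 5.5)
The identity follows from a Leibniz (product) rule for the left operator ${}^{\psi}\overline{\partial}_x$ applied to a product of a real-valued scalar function and a quaternion-valued function. We abbreviate
$$\Lambda(x):=\sum_{k=0}^2 \lambda_k\circ T(x)\in\bR, \qquad G(x):={}_a I^{1-\alpha,\sigma,\varphi}[g](T(x),T(y)),$$
with $y$ held fixed. Since each $\lambda_k\in C^1(\overline{J_a^b},\bR)$ and $T$ is smooth on $\Xi$, the chain rule gives $e^{\Lambda}\in C^1(\Xi,\bR)$. We also need $G\in C^1(\Xi,\mathbb H)$. This holds because $g\in C^1(J_a^b,\mathbb H)$: the coordinatewise fractional proportional integral of a $C^1$ function is $C^1$ in $x$, as is implicitly assumed throughout \cite{GPBN}. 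Composition with the smooth map $T$ preserves this regularity.

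The key step is to expand with the product rule. Write ${}^{\psi}\overline{\partial}_x=\sum_{j=0}^2\psi_j\partial_{x_j}$. Then
$$\partial_{x_j}\big(e^{\Lambda}G\big)=(\partial_{x_j}e^{\Lambda})\,G+e^{\Lambda}\,\partial_{x_j}G=e^{\Lambda}(\partial_{x_j}\Lambda)\,G+e^{\Lambda}\,\partial_{x_j}G.$$
Multiply on the left by $\psi_j$ and sum over $j$. The factors $e^{\Lambda}$ and $\partial_{x_j}\Lambda$ are real scalars, so they commute with $\psi_j$. Hence
$$\psi_j\,e^{\Lambda}(\partial_{x_j}\Lambda)G=e^{\Lambda}\big(\psi_j\partial_{x_j}\Lambda\big)G .$$
Summing over $j$ yields
$$ {}^{\psi}\overline{\partial}_x\big(e^{\Lambda}G\big)=e^{\Lambda}\Big[\,{}^{\psi}\overline{\partial}_x[\Lambda]\,G+{}^{\psi}\overline{\partial}_x G\Big].$$
The bracket is, term by term, the defining expression of ${}_a^{\psi}\overline{\partial}_{x}^{\alpha,\sigma,\varphi,T}[g](x,y)$ in Definition \ref{DEF01}. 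This proves the proposition.

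The only point requiring care is the noncommutativity of $\mathbb H$. The scalar factor $e^{\Lambda}$ may be pulled to the left of every $\psi_j$ only because $\Lambda$ is real-valued. Likewise, the order ${}^{\psi}\overline{\partial}_x[\Lambda]\,G$, with the quaternion ${}^{\psi}\overline{\partial}_x[\Lambda]$ standing on the left of $G$, is exactly the order in Definition \ref{DEF01}. It must not be swapped to $G\,{}^{\psi}\overline{\partial}_x[\Lambda]$.

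No use of \eqref{equaHyp} or of the Möbius structure of $T$ is needed; the argument is valid for any $C^1$ map $T$ with $T(\Xi)=J_a^b$.
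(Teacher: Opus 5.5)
Your proof is correct and is exactly the paper's argument: apply the product rule for ${}^{\psi}\overline{\partial}_x$ and use that $e^{\sum_k\lambda_k\circ T}$ is real-valued, so it commutes with the $\psi_j$ (the paper merely writes the exponential on the right at the end). Your regularity remark is a bit generous, since $C^1$ on the open box does not by itself make the fractional integral from $a_i$ well defined and $C^1$; the paper tacitly assumes this too, so it does not affect the comparison.
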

\begin{proof}
 \begin{align*}
& {}^{\psi}\overline{\partial}_x \left\{   e^{\sum_{k=0}^2 \lambda_k \circ T(x)} {}_a I^{1-\alpha ,\sigma ,\varphi }[g]( T(x), T( y)   )   \right\} =\\
 & {}^{\psi}\overline{\partial}_x \left\{   e^{\sum_{k=0}^2 \lambda_k \circ T(x)}   \right\}
{}_a I^{1-\alpha ,\sigma ,\varphi }[g]( T(x), T( y)   )  
  +  e^{\sum_{k=0}^2 \lambda_k \circ T(x)} {}^{\psi}\overline{\partial}_x \left\{   {}_a I^{1-\alpha ,\sigma ,\varphi }[g]( T(x), T( y)   )   \right\} = \\
  & \bigg\{ {}^{\psi}\overline{\partial}_x \left[   \sum_{k=0}^2 \lambda_k \circ T(x)    \right]
{}_a I^{1-\alpha ,\sigma ,\varphi }[g]( T(x), T( y)   )    
   +   {}^{\psi}\overline{\partial}_x \bigg[  {}_a I^{1-\alpha ,\sigma ,\varphi }[g]( T(x), T( y)   )   \bigg] \bigg\} e^{\sum_{k=0}^2 \lambda_k \circ T(x)}.
\end{align*}
\end{proof}

\begin{remark}\label{RelDI}
Define  
$${\bf D}_a^{1-\alpha, \sigma, \varphi} = \sum_{i=0}^2 {}_{a_i} D^{1-\alpha_i,\sigma_i,\varphi_i} ,$$
where ${}_{a_i} D^{1-\alpha_i,\sigma_i,\varphi_i}$ is the fractional proportional partial derivative  with order $1-\alpha_i$, with respect to $\varphi_i$ in the real component $x_i$ of $x$, for $i=0,1,2$, according to \eqref{DerFracPropRespecFuntL}. Then  
\begin{align*}
& {\bf D}_a^{1-\alpha, \sigma, \varphi} ({}_a {  I}^{\alpha,\sigma,\varphi} f) (x,y)  =  \sum_{j=0}^2 {}_{a_j} D^{1-\alpha_j,\sigma_j,\varphi_j}  \sum_{i=0}^2 ({}_{a_i} I^{\alpha_i,\sigma_i,\varphi_i} f) (y_0, \dots, x_i ,\dots, y_2) \\
 = & \sum_{i=0}^2  f  (y_0, \dots, x_i ,\dots, y_2) + 
 { \displaystyle \sum_{
{   \begin{subarray}{c} i,j=0 \\
    i\neq j \end{subarray} }}^2 }
 {}_{a_j} D^{1-\alpha_j,\sigma_j,\varphi_j}   ({}_{a_i} I^{\alpha_i,\sigma_i,\varphi_i} f) (y_0, \dots, x_i ,\dots, y_2), 
 \end{align*}
for all  $f  \in AC^1(J_a^b,\mathbb H)$,  where 
 $ {}_a {  I}^{\alpha,\sigma,\varphi}$ is given by Definition \ref{integraoper} and \eqref{FundTheoFPW} has been used.
\end{remark}

\begin{proposition}[Stokes and Borel-Pompeiu type formulas associated to  $ {}_a^{\psi}\overline{\partial}_{x}^{\alpha , \sigma , \varphi, T}$] \label{prop03} 
Let $T$ be  a  Möbius transformation on $\cA$  given by  \eqref{MoebiusR3} and let $\Xi\subset \cA$ be such that  $T(\Xi)=J_a^b$.  Given $g  \in AC^1(J_a^b,\mathbb H)$, we have
\begin{align*}
\int_{\partial \Xi}  {}^{\psi}\eta_x 
    e^{\sum_{k=0}^2 \lambda_k \circ T(x)} 
    & {}_a I^{1-\alpha ,\sigma ,\varphi }[g]( T(x), T( y)   ) 
    \\
   & =
\int_{\Xi}  {}_a^{\psi}\overline{\partial}_{x}^{\alpha , \sigma , \varphi, T}[g]( x,y   )       e^{\sum_{k=0}^2 
 \lambda_k \circ T(x)} dx   
\end{align*} 
and 
 \begin{align*}
&{\bf D}_a^{1-\alpha, \sigma, \varphi} \bigg[\int_{\partial\Xi }   e^{\sum_{k=0}^2[
 \lambda_k \circ T(\tau)  -\lambda_k \circ T(x)]}  K_{\psi}(\tau -x) {}^{\psi}\eta_{\tau}     {}_a I^{1-\alpha ,\sigma ,\varphi }[g]( T(\tau), T( y)   )  \bigg]\\
& - 
{\bf D}_a^{1-\alpha, \sigma, \varphi} \bigg[ \int_{\Xi }
  e^{\sum_{k=0}^2[
 \lambda_k \circ T(\tau)  -\lambda_k \circ T(x)]} 
 K_{\psi}(\tau-x)   {}_a^{\psi}\overline{\partial}_{\tau}^{\alpha , \sigma , \varphi, T}[g](  \tau ,  y   )     d\tau \bigg] \\ 
  &=
\begin{cases}
 \displaystyle  \sum_{i=0}^2 g (T(y)_0, \dots, T(x)_i ,\dots, T(y)_2)  \\
    +  
  { \displaystyle \sum_{
{     \begin{subarray}{c} i,j=0 \\
    i\neq j \end{subarray} }}^2 } {}_{a_j} D^{1-\alpha_j,\sigma_j,\varphi_j}  ({}_{a_i} I^{1-\alpha_i,\sigma_i,\varphi_i} g) (T(y)_0, \dots, T(x)_i ,\dots, T(y)_2), & x\in\Xi, 
\\
 0, & x\in\cA \setminus\overline{\Xi},
\end{cases}
\end{align*}
 \end{proposition}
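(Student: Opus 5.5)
The plan is to apply Proposition \ref{SBPF} on $\Xi$ with a well-chosen auxiliary function, and then use Proposition \ref{prop1t} to rewrite the resulting volume integrands.

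For fixed $y$, set
$$h(x):=e^{\sum_{k=0}^2 \lambda_k \circ T(x)}\, {}_a I^{1-\alpha ,\sigma ,\varphi }[g]( T(x), T( y)).$$
The standing hypothesis on membership in the module class makes $x\mapsto {}_aI^{1-\alpha,\sigma,\varphi}[g](\cdot,T(y))$ of class $C^1$, and $T$ is smooth on $\Xi$, so $h\in C^1(\Xi,\mathbb H)$.

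\textbf{Stokes formula.} Apply the first identity of Proposition \ref{SBPF} with $g\equiv 1$ and $f=h$. Since ${}^{\psi}\overline{\partial}_r 1=0$, this gives
$$\int_{\partial\Xi}{}^{\psi}\eta_x h=\int_\Xi {}^{\psi}\overline{\partial}h\,dx.$$
Proposition \ref{prop1t} identifies ${}^{\psi}\overline{\partial}h$ with $e^{\sum\lambda_k\circ T(x)}\,{}_a^{\psi}\overline{\partial}_{x}^{\alpha,\sigma,\varphi,T}[g](x,y)$. The exponential is real, so it commutes with quaternions and can be written on the right as in the statement. This gives the first formula.

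\textbf{Borel-Pompeiu formula.} Apply the second identity of Proposition \ref{SBPF} with $f=h$ and the right-hand function taken to be $0$. This gives, for $x\in\Xi$,
$$\int_{\partial\Xi}K_\psi(\tau-x){}^{\psi}\eta_\tau h(\tau)-\int_\Xi K_\psi(\tau-x)\,{}^{\psi}\overline{\partial}h(\tau)\,d\tau=h(x),$$
and the left side equals $0$ for $x\in\cA\setminus\overline\Xi$.

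Again substitute ${}^{\psi}\overline{\partial}h$ using Proposition \ref{prop1t}. Then multiply both sides by the real scalar $e^{-\sum\lambda_k\circ T(x)}$, which does not depend on $\tau$ and so can be moved inside the integrals. This produces the exponents $\sum[\lambda_k\circ T(\tau)-\lambda_k\circ T(x)]$ appearing in the statement. The right side becomes ${}_aI^{1-\alpha,\sigma,\varphi}[g](T(x),T(y))$ for $x\in\Xi$ and $0$ outside $\overline\Xi$.

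Finally, apply ${\bf D}_a^{1-\alpha,\sigma,\varphi}$ to both sides and use Remark \ref{RelDI} (with $\alpha$ replaced by $1-\alpha$), which rests on the fundamental theorem \eqref{FundTheoFPW}. On $\Xi$ this yields exactly $\sum_i g(T(y)_0,\dots,T(x)_i,\dots,T(y)_2)$ plus the off-diagonal mixed terms ${}_{a_j}D^{1-\alpha_j,\sigma_j,\varphi_j}({}_{a_i}I^{1-\alpha_i,\sigma_i,\varphi_i}g)$. Outside $\overline\Xi$ the operator applied to $0$ gives $0$.

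\textbf{Main obstacle.} The delicate step is this last one. One must say precisely in which variables ${\bf D}_a^{1-\alpha,\sigma,\varphi}$ acts. The intended reading is that it acts in the components of $T(x)$, viewed as coordinates of $J_a^b$, with $T(y)$ frozen as the base point. Under this reading, the identity from Remark \ref{RelDI} applies to the function $\xi\mapsto {}_aI^{1-\alpha,\sigma,\varphi}[g](\xi,T(y))$ evaluated at $\xi=T(x)$. One also has to check that the left side is regular enough for ${\bf D}$ to be applied, and that the $x$-dependence through $T$ is handled consistently. Here I would use only that both sides coincide as functions on $\Xi$ and apply the operator to that common function.
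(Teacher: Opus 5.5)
Your proposal follows the paper's proof step for step: the same auxiliary function $h(x)=e^{\sum_k\lambda_k\circ T(x)}\,{}_aI^{1-\alpha,\sigma,\varphi}[g](T(x),T(y))$, the Stokes and Borel--Pompeiu identities of Proposition~\ref{SBPF} applied to it, Proposition~\ref{prop1t} to rewrite ${}^{\psi}\overline{\partial}h$, multiplication by $e^{-\sum_k\lambda_k\circ T(x)}$, and finally ${\bf D}_a^{1-\alpha,\sigma,\varphi}$ combined with Remark~\ref{RelDI}. The paper glosses over the same two soft points you do. First, the $C^1$ regularity of $h$ is not a hypothesis of this proposition: $g$ is only $AC^1$, and the ``standing'' module-membership assumption you invoke is not among the hypotheses. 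Second, the claim ``the operator applied to $0$ gives $0$'' off $\overline{\Xi}$ is not automatic, because ${\bf D}_a^{1-\alpha,\sigma,\varphi}$ is nonlocal: it integrates along coordinate segments starting at $a_j$, which can pass through $J_a^b$ where the bracketed function is nonzero, and $\varphi$ is only given on $\overline{J_a^b}$.
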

\begin{proof}
From Proposition 
\ref{SBPF} we see that 
 \begin{align*}
& \int_{\partial \Xi}  {}^{\psi}\eta_x h(x) =  \int_{\Xi}  {}^{\psi}\overline{\partial}  h(x) dx, \\ 
&\int_{\partial\Xi }  K_{\psi}(\tau -x){}^{\psi}\eta_{\tau} h(\tau) - \int_{\Xi }
 K_{\psi}(\tau-x) ({}^{\psi}\overline{\partial}h)(\tau)  d\tau \nonumber  =
\begin{cases}
h(x) , & x\in\Xi, 
\\
0, & x\in\cA \setminus\overline{\Xi}.
\end{cases}
\end{align*} 
for all $h\in  AC^1(\Xi,\mathbb H)$. In particular, 
 if   $$h(x) =    e^{\sum_{k=0}^2 \lambda_k \circ T(x)} {}_a I^{1-\alpha ,\sigma ,\varphi }[g]( T(x), T( y)   ) ,$$
then 
 \begin{align*}
  \int_{\partial \Xi}  {}^{\psi}\eta_x 
    e^{\sum_{k=0}^2 \lambda_k \circ T(x)} {}_a I^{1-\alpha ,\sigma ,\varphi }[g]( T(x), T( y)   )    
    = &  \int_{\Xi}  {}^{\psi}\overline{\partial} \bigg[    e^{\sum_{k=0}^2 \lambda_k \circ T(x)} {}_a I^{1-\alpha ,\sigma ,\varphi }[g]( T(x), T( y)   ) \bigg]  dx  \\ 
\end{align*} 
 and
  \begin{align*}
&\int_{\partial\Xi }  K_{\psi}(\tau -x){}^{\psi}\eta_{\tau}    e^{\sum_{k=0}^2 \lambda_k \circ T(\tau)} {}_a I^{1-\alpha ,\sigma ,\varphi }[g]( T(\tau), T( y)   )  \\
& - \int_{\Xi }
 K_{\psi}(\tau-x)  {}^{\psi}\overline{\partial} \bigg[  e^{\sum_{k=0}^2 \lambda_k \circ T(\tau)} {}_a I^{1-\alpha ,\sigma ,\varphi }[g]( T(\tau), T( y)   ) \bigg] d\tau \\ 
  &=
\begin{cases}
    e^{\sum_{k=0}^2 \lambda_k \circ T(x)} {}_a I^{1-\alpha ,\sigma ,\varphi }[g]( T(x), T( y)   )  , & x\in\Xi, 
\\
0, & x\in\cA \setminus\overline{\Xi}.
\end{cases}
\end{align*} 
Multiplying by $e^{-\sum_{k=0}^2 \lambda_k \circ T(x)}$ on both sides of the previous formulas and using Proposition \ref{prop1t}  we have that 
 \begin{align*}
\int_{\partial \Xi}  {}^{\psi}\eta_x 
    e^{\sum_{k=0}^2 \lambda_k \circ T(x)} & {}_a I^{1-\alpha ,\sigma ,\varphi }[g]( T(x), T( y)   )   
    \\
    & = \int_{\Xi}  {}_a^{\psi}\overline{\partial}_{x}^{\alpha , \sigma , \varphi, T}[g]( x,y  )       e^{\sum_{k=0}^2 
 \lambda_k \circ T(x) } dx   
\end{align*}
 and
\begin{align*}
&\int_{\partial\Xi }   e^{\sum_{k=0}^2[
 \lambda_k \circ T(\tau)  -\lambda_k \circ T(x)]}  K_{\psi}(\tau -x) {}^{\psi}\eta_{\tau}     {}_a I^{1-\alpha ,\sigma ,\varphi }[g]( T(\tau), T( y)   )  \\
& - \int_{\Xi }
  e^{\sum_{k=0}^2[
 \lambda_k \circ T(\tau)  -\lambda_k \circ T(x)]} 
 K_{\psi}(\tau-x)   {}_a^{\psi}\overline{\partial}_{\tau}^{\alpha , \sigma , \varphi, T}[g](  \tau,  y    )     d\tau \\ 
  &=
\begin{cases}
   {}_a I^{1-\alpha ,\sigma ,\varphi }[g]( T(x), T( y)   )  , & x\in\Xi, 
\\
0, & x\in\cA \setminus\overline{\Xi}.
\end{cases}
\end{align*} 
Apply the operator $ {\bf D}_a^{1-\alpha, \sigma, \varphi}$ to both sides of the last formula  to obtain that   
 \begin{align*}
&{\bf D}_a^{1-\alpha, \sigma, \varphi} \bigg[\int_{\partial\Xi }   e^{\sum_{k=0}^2[
 \lambda_k \circ T(\tau)  -\lambda_k \circ T(x)]}  K_{\psi}(\tau -x) {}^{\psi}\eta_{\tau}     {}_a I^{1-\alpha ,\sigma ,\varphi }[g]( T(\tau), T( y)   )  \bigg]\\
& - 
{\bf D}_a^{1-\alpha, \sigma, \varphi} \bigg[ \int_{\Xi }
  e^{\sum_{k=0}^2[
 \lambda_k \circ T(\tau)  -\lambda_k \circ T(x)]} 
 K_{\psi}(\tau-x)   {}_a^{\psi}\overline{\partial}_{\tau}^{\alpha , \sigma , \varphi, T}[g](  \tau,  y    )     d\tau \bigg] \\ 
  &=
\begin{cases}
{\bf D}_a^{1-\alpha, \sigma, \varphi}   {}_a I^{1-\alpha ,\sigma ,\varphi }[g]( T(x), T( y)   )  , & x\in\Xi, 
\\
 0, & x\in\cA \setminus\overline{\Xi}.
\end{cases}
\end{align*} 
 Therefore, the computations presented  in Remark \ref{RelDI} allow us to simplify the right-hand side as follows:
 \begin{align*}
&{\bf D}_a^{1-\alpha, \sigma, \varphi} \bigg[\int_{\partial\Xi }   e^{\sum_{k=0}^2[
 \lambda_k \circ T(\tau)  -\lambda_k \circ T(x)]}  K_{\psi}(\tau -x) {}^{\psi}\eta_{\tau}     {}_a I^{1-\alpha ,\sigma ,\varphi }[g]( T(\tau), T( y)   )  \bigg]\\
& - 
{\bf D}_a^{1-\alpha, \sigma, \varphi} \bigg[ \int_{\Xi }
  e^{\sum_{k=0}^2[
 \lambda_k \circ T(\tau)  -\lambda_k \circ T(x)]} 
 K_{\psi}(\tau-x)   {}_a^{\psi}\overline{\partial}_{\tau}^{\alpha , \sigma , \varphi, T}[g](  \tau , y   )     d\tau \bigg] \\ 
  &=
\begin{cases}
  \displaystyle \sum_{i=0}^2 g (T(y)_0, \dots, T(x)_i ,\dots, T(y)_2)  \\
    +  
  { \displaystyle \sum_{
{   \begin{subarray}{c} i,j=0 \\
    i\neq j \end{subarray} }}^2 } {}_{a_j} D^{1-\alpha_j,\sigma_j,\varphi_j}  ({}_{a_i} I^{1-\alpha_i,\sigma_i,\varphi_i} g) (T(y)_0, \dots, T(x)_i ,\dots, T(y)_2), & x\in\Xi, 
\\
 0,\hspace{3cm} x\in\cA \setminus\overline{\Xi}. & 
\end{cases}
\end{align*}
\end{proof}

 \begin{definition}\label{Def5}
 Let $T$ be  a  Möbius transformation on $\cA$  given by  \eqref{MoebiusR3} and let $\Xi\subset \cA$ be such that  $T(\Xi)=J_a^b$. The quaternionic right module of 
 left-$\alpha$-$\varphi$-fractional $\sigma$-proportional $\psi$-hyperholomorphic functions associated to $T$ on  ${\Xi}$  is denoted by 
 $${}^{\psi}_a{{\mathcal M}}^{\alpha,\sigma, \varphi, T} (\Xi):= \{h\in AC^1(\Xi,\mathbb H) \ \mid \ h\circ T^{-1}\in \textrm{Ker} ({}_a^{\psi}\overline{\partial}_{x}^{\alpha, \sigma, \varphi, T})\cap C^1(J_a^b, \mathbb H)\}.$$
 \end{definition}
 
\begin{remark}
Note that $ h\in  {}^{\psi}_a{{\mathcal M}}^{\alpha,\sigma, \varphi, T} (\Xi)$ if and only if
$${}^{\psi}\overline{\partial}_x \bigg( {}_a I^{1-\alpha ,\sigma ,\varphi }[h\circ T^{-1} ]( T(x), T (y))\bigg) = - {}^{\psi} \overline{\partial}_x \left[\sum_{k=0}^2 \lambda_k \circ T(x)\right] {}_a I^{1-\alpha ,\sigma ,\varphi }[h\circ T^{-1}]( T(x), T (y)),$$ 
which provides an extension of the Cauchy-Riemann equations to this setting.
 \end{remark}

 \begin{corollary}\label{cor110}[Cauchy type theorem and  formula in  ${}^{\psi}_a{{\mathcal M}}^{\alpha,\sigma, \varphi, T} (\Xi)$]
 Let $T$ be  a  Möbius transformation on $\cA$  given by  \eqref{MoebiusR3} and let $\Xi\subset \cA$ be such that  $T(\Xi)=J_a^b$.  If $h\in {}^{\psi}_a{{\mathcal M}}^{\alpha,\sigma, \varphi, T} (\Xi)$ then 
\begin{align*}
& \int_{\partial \Xi}  {}^{\psi}\eta_x 
    e^{\sum_{k=0}^2 \lambda_k \circ T(x)} {}_a I^{1-\alpha ,\sigma ,\varphi }[h\circ  T^{-1}]( T(x), T( y)   )   =0  
\end{align*} 
 and 
 \begin{align*}
&{\bf D}_a^{1-\alpha, \sigma, \varphi} \bigg[\int_{\partial\Xi }   e^{\sum_{k=0}^2[
 \lambda_k \circ T(\tau)  -\lambda_k \circ T(x)]}  K_{\psi}(\tau -x) {}^{\psi}\eta_{\tau}     {}_a I^{1-\alpha ,\sigma ,\varphi }[h\circ  T^{-1}]( T(\tau), T( y)   )  \bigg]\\
  &=
\begin{cases}
  \displaystyle \sum_{i=0}^2 h\circ  T^{-1} (T(y)_0, \dots, T(x)_i ,\dots, T(y)_2)  \\
    +  
  { \displaystyle \sum_{
{  \begin{subarray}{c} i,j=0 \\
    i\neq j \end{subarray} }}^2 } {}_{a_j} D^{1-\alpha_j,\sigma_j,\varphi_j}  ({}_{a_i} I^{1-\alpha_i,\sigma_i,\varphi_i} h\circ T^{-1}) (T(y)_0, \dots, T(x)_i ,\dots, T(y)_2), & x\in\Xi, 
\\
 0, \hspace{3cm} x\in\cA \setminus\overline{\Xi}, &
\end{cases}
\end{align*}
  \end{corollary}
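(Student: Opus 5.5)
This is a direct specialization of Proposition \ref{prop03}, taking $g:=h\circ T^{-1}$. First I check that this $g$ is admissible. By Definition \ref{Def5}, membership $h\in {}^{\psi}_a{{\mathcal M}}^{\alpha,\sigma, \varphi, T} (\Xi)$ means $h\circ T^{-1}\in C^1(J_a^b,\mathbb H)$. This holds because $h\in AC^1(\Xi,\mathbb H)$ and $T^{-1}$ is a smooth (in the affine case, affine) bijection of $J_a^b$ onto $\Xi$. So $g\in AC^1(J_a^b,\mathbb H)$, and Proposition \ref{prop1t} and Proposition \ref{prop03} both apply to $g$.

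The key input is the defining condition of the module:
\[
{}_a^{\psi}\overline{\partial}_{x}^{\alpha , \sigma , \varphi, T}[h\circ T^{-1}](x,y)=0 .
\]
I read this as holding for all $x\in\Xi$ (with $y$ fixed), since ${}_a^{\psi}\overline{\partial}_{x}^{\alpha , \sigma , \varphi, T}$ acts in the variable $x$ ranging over $\Xi=T^{-1}(J_a^b)$. The domain of validity has to be made precise here: the kernel membership must be understood on all of $\Xi$, not just at isolated points.

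For the Cauchy type theorem, I substitute $g=h\circ T^{-1}$ into the Stokes formula of Proposition \ref{prop03}. The right-hand side is
\[
\int_{\Xi}{}_a^{\psi}\overline{\partial}_{x}^{\alpha , \sigma , \varphi, T}[g](x,y)\,e^{\sum_k\lambda_k\circ T(x)}\,dx .
\]
Its integrand vanishes identically on $\Xi$, so the integral is zero. This gives the first identity.

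For the Cauchy type formula, I substitute the same $g$ into the Borel--Pompeiu formula of Proposition \ref{prop03}. The volume term there is
\[
{\bf D}_a^{1-\alpha,\sigma,\varphi}\Big[\int_\Xi e^{\sum_k[\lambda_k\circ T(\tau)-\lambda_k\circ T(x)]}K_\psi(\tau-x)\,{}_a^{\psi}\overline{\partial}_{\tau}^{\alpha , \sigma , \varphi, T}[g](\tau,y)\,d\tau\Big].
\]
Its integrand is zero for every $\tau\in\Xi$, so the integral is the zero function of $x$, and applying ${\bf D}_a^{1-\alpha,\sigma,\varphi}$ to it still gives zero. What remains is the boundary term, equated to the right-hand side of Proposition \ref{prop03} with $g$ replaced by $h\circ T^{-1}$. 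This is exactly the claimed formula in both cases, $x\in\Xi$ and $x\in\cA\setminus\overline{\Xi}$.

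I expect no real obstacle beyond this bookkeeping. The one point needing care is ensuring that the regularity implicitly required by Proposition \ref{prop03} holds for $h\circ T^{-1}$, namely that the fractional integral is $C^1$ up to the boundary so that Stokes applies. I would take this from the standing assumptions under which Proposition \ref{prop03} was stated, and from the $C^1(J_a^b,\mathbb H)$ requirement in Definition \ref{Def5}.
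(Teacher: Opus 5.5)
Your proposal is correct and matches the paper's proof: substitute $g=h\circ T^{-1}$ into Proposition \ref{prop03}, and use the kernel condition to make the volume integrands vanish. The paper leaves that last step implicit. One slip: the $C^1$ regularity of $h\circ T^{-1}$ comes directly from Definition \ref{Def5}, not from composing $h\in AC^1(\Xi,\mathbb H)$ with a smooth $T^{-1}$ (that composition alone would not give $C^1$), but this does not affect the argument.
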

\begin{proof}
Set $g= h\circ T^{-1}$ in Proposition \ref{prop03}.
\end{proof}

\begin{remark} If $T$ is a translation or a scalar dilation (i.e., $a=a_0\mathbf{e}_0$ in~\eqref{AffineR3}, so that the Jacobian of $T$ is a scalar multiple of the identity), then a direct computation shows that 
\[
 h\circ T^{-1} (T(y)_0, \dots, T(x)_i ,\dots, T(y)_2) =    h(y _0, \dots,  x _i ,\dots,  y _2), \hspace{0.5cm} i=0,1,2.
\]
For a general affine transformation whose Jacobian involves a rotation, this identity does not hold.\footnote{As a concrete counterexample, take
$T(x_0,x_1,x_2)=(-x_1,x_0,x_2)$ (a rotation in the $\psi_0$-$\psi_1$ plane), so that $T^{-1}(u_0,u_1,u_2)=(u_1,-u_0,u_2)$.
For $i=1$ one computes $(T(y)_0,T(x)_1,T(y)_2)=(-y_1,x_0,y_2)$, and hence $T^{-1}(-y_1,x_0,y_2)=(x_0,y_1,y_2)$,
whereas the right-hand side of the claimed identity would require $(y_0,x_1,y_2)$.}
Under this additional hypothesis, the Cauchy type formula in Corollary \ref{cor110} simplifies to
 \begin{align*}
&{\bf D}_a^{1-\alpha, \sigma, \varphi} \bigg[\int_{\partial\Xi }   e^{\sum_{k=0}^2[
 \lambda_k \circ T(\tau)  -\lambda_k \circ T(x)]}  K_{\psi}(\tau -x) {}^{\psi}\eta_{\tau}     {}_a I^{1-\alpha ,\sigma ,\varphi }[h\circ T^{-1}]( T(\tau), T( y)   )  \bigg]\\
   &=
\begin{cases}
  \displaystyle \sum_{i=0}^2 h ( y _0, \dots,  x _i ,\dots,  y _2)  \\
    +  
  { \displaystyle \sum_{
{  \begin{subarray}{c} i,j=0 \\
    i\neq j \end{subarray} }}^2 } {}_{a_j} D^{1-\alpha_j,\sigma_j,\varphi_j}  ({}_{a_i} I^{1-\alpha_i,\sigma_i,\varphi_i} h\circ T^{-1}) (T(y)_0, \dots, T(x)_i ,\dots, T(y)_2), \\
   {}\hspace{2cm} x\in\Xi , & 
\\
 0, {}\hspace{1.5cm} x\in\cA \setminus\overline{\Xi}.& 
\end{cases}
\end{align*}
\end{remark}

\begin{corollary}\label{cor14}
 Let $T$ be  an affine  transformation on $\cA$  
 given by 
 \eqref{AffineR3}  and let $\Xi\subset \cA$ be such that  $T(\Xi)=J_a^b$.  
 Then 
\begin{align*}  
{}^{\psi}\overline{\partial}_x & \left[  e^{\sum_{k=0}^2 \lambda_k \circ T(x)}  {}_a I^{1-\alpha ,\sigma ,\varphi }[A_T  f]( T(x), T( y)   )   \right] 
\\
&=   {}_a^{\psi}\overline{\partial}_{x}^{\alpha , \sigma , \varphi, T}[A_T f ]( x, y  )  e^{ \sum_{k=0}^2 \lambda_{k}\circ T(x)  },
\end{align*}
 for all $f\in C^1(J_a^b, \mathbb H) $.
\end{corollary}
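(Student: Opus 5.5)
This corollary is a direct specialization of Proposition \ref{prop1t}, so the plan is to apply that proposition with the choice $g := A_T f$. First I check that the hypotheses are met. Since $T$ is affine and given by \eqref{AffineR3}, it falls in the case $c=0$ of \eqref{MoebiusR3}. Hence Proposition \ref{conformableproperyofD} gives the constant quaternion
\[
A_T = \psi_3 \frac{\overline{a}}{|a|}\overline{\psi_3}.
\]
Consequently $A_T f \in C^1(J_a^b,\mathbb H)$ whenever $f\in C^1(J_a^b,\mathbb H)$: multiplying on the left by a constant quaternion preserves the regularity of every real component, and in particular membership in $AC^1$. The operator ${}_a I^{1-\alpha,\sigma,\varphi}[A_T f]$ and the operator ${}_a^{\psi}\overline{\partial}_{x}^{\alpha,\sigma,\varphi,T}[A_T f]$ from Definition \ref{DEF01} are therefore well defined. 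Moreover, $T(\Xi)=J_a^b$ holds by assumption.

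Next I reproduce the product-rule computation from the proof of Proposition \ref{prop1t}, with $g=A_Tf$. The factor $e^{\sum_k \lambda_k\circ T(x)}$ is real-valued, so it commutes with every quaternion. The operator ${}^{\psi}\overline{\partial}_x$ acting on the product then splits as
\[
{}^{\psi}\overline{\partial}_x\Big[\sum_k\lambda_k\circ T\Big]\, e^{\sum_k\lambda_k\circ T}\, {}_aI^{1-\alpha,\sigma,\varphi}[A_Tf](T(x),T(y)) \;+\; e^{\sum_k\lambda_k\circ T}\, {}^{\psi}\overline{\partial}_x\Big[{}_aI^{1-\alpha,\sigma,\varphi}[A_Tf](T(x),T(y))\Big].
\]
Pulling the real exponential to the right gives exactly ${}_a^{\psi}\overline{\partial}_{x}^{\alpha,\sigma,\varphi,T}[A_Tf](x,y)\, e^{\sum_k\lambda_k\circ T(x)}$, as required by Definition \ref{DEF01}.

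The only point needing care is the order of quaternionic multiplication. The exponential is scalar, which is what lets it move freely to the right-hand side. By contrast, ${}^{\psi}\overline{\partial}_x[\sum\lambda_k\circ T]$ must remain to the left of ${}_aI^{1-\alpha,\sigma,\varphi}[A_Tf]$. This is consistent with the definition.

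It is also worth noting, as a side remark, that ${}_aI^{1-\alpha,\sigma,\varphi}$ is real-linear and acts componentwise by real integrals. Because $A_T$ is constant, this gives ${}_aI^{1-\alpha,\sigma,\varphi}[A_Tf]=A_T\,{}_aI^{1-\alpha,\sigma,\varphi}[f]$. That identity is not needed for the statement itself, but it is useful for the subsequent covariance theorem. No real obstacle is expected.
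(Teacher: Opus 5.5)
Your proposal is correct and matches the paper's proof: because $c=0$, $A_T$ is a constant quaternion, so $g=A_Tf\in C^1(J_a^b,\mathbb H)$ and Proposition \ref{prop1t} applies directly. Your re-derivation of the product rule simply reproduces that proposition's proof, and your side remark on pulling $A_T$ through ${}_aI^{1-\alpha,\sigma,\varphi}$ is not needed for this statement.
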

\begin{proof}
Since $c=0$ for an affine transformation, $A_T$ is a quaternionic constant. The result then follows by applying Proposition~\ref{prop1t} with $g=A_Tf$. 
\end{proof}

\begin{theorem}\label{Afinneprop}[An affine  covariance type property of $ {}_a^{\psi}\overline{\partial}^{\alpha , \sigma , \varphi, T} $]  
Let $T$ be  an affine  transformation on $\cA$  
 given by 
 \eqref{AffineR3}  and let $\Xi\subset \cA$ be such that  $T(\Xi)=J_a^b$.  
  If  $f\in AC^1 (J_a^b, \mathbb H)$,  then   
 \begin{align*}
   {}_a^{\psi}\overline{\partial}_{x}^{\alpha , \sigma , \varphi, T} [A_T  f] ( x ,  y  )  e^{ \sum_{k=0}^2 \lambda_{k}\circ T(x)  } 
   = 
 \beta_T (x) {}_a^{\psi}\overline{\partial}_{\xi}^{ \alpha , \sigma , \varphi} [f] (\xi, T(y) ) e^{ \sum_{k=0}^2 \lambda_k (\xi)},    \end{align*}
where     $\xi=T(x)$ and $\beta_T(x) := B_T (x)  D{\varphi} (\xi, T(y)) \sigma^{-1}$. 
 \end{theorem}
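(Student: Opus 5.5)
Set $H(\xi):=e^{\sum_{k=0}^2\lambda_k(\xi)}\,{}_aI^{1-\alpha,\sigma,\varphi}[f](\xi,T(y))$, regarded as a $C^1$ function of $\xi\in J_a^b$ with $T(y)$ frozen as the second argument. The plan is to push the left-hand side through Corollary~\ref{cor14}, use the classical covariance identity \eqref{invarianCauchy} with $c=0$, and finish with \eqref{DirectComputations}. By Corollary~\ref{cor14}, the left-hand side of the theorem equals
\[
{}^{\psi}\overline{\partial}_x\Big[e^{\sum_{k=0}^2\lambda_k\circ T(x)}\,{}_aI^{1-\alpha,\sigma,\varphi}[A_Tf](T(x),T(y))\Big].
\]

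\textbf{Step 1: pull $A_T$ out.} Since $c=0$, the factor $A_T=\psi_3\frac{\overline a}{|a|}\overline{\psi_3}$ is a constant quaternion. The fractional integral ${}_aI^{1-\alpha,\sigma,\varphi}$ is built from real scalar kernels integrated coordinatewise, so left multiplication by a constant quaternion commutes with it. Hence ${}_aI^{1-\alpha,\sigma,\varphi}[A_Tf]=A_T\,{}_aI^{1-\alpha,\sigma,\varphi}[f]$. The exponential factor is real, so the bracket above equals $A_T\,(H\circ T)(x)$.

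\textbf{Step 2: apply the classical covariance.} Apply Proposition~\ref{conformableproperyofD} to $H\in C^1(J_a^b,\mathbb H)$, with $\Omega=\Xi$ and the image domain equal to $J_a^b$. This gives
\[
{}^{\psi}\overline{\partial}_x[A_T\,H\circ T](x)=B_T\,({}^{\psi}\overline{\partial}_\xi H)(T(x)).
\]
One point needs care here: in $H\circ T$ the second argument of ${}_aI^{1-\alpha,\sigma,\varphi}$ is $T(y)$, which is fixed and does not depend on $x$. So $H\circ T$ is genuinely the composition of a function of $\xi$ alone with $T$, and the proposition applies directly.

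\textbf{Step 3: apply \eqref{DirectComputations}.} Use \eqref{DirectComputations} with $\zeta=T(y)$:
\[
{}^{\psi}\overline{\partial}_\xi H(\xi)=D\varphi(\xi,T(y))\,\sigma^{-1}\,{}_a^{\psi}\overline{\partial}^{\alpha,\sigma,\varphi}_\xi[f](\xi,T(y))\,e^{\sum_{k=0}^2\lambda_k(\xi)}.
\]
Combining Steps 2 and 3 yields $B_T\,D\varphi(\xi,T(y))\,\sigma^{-1}\,{}_a^{\psi}\overline{\partial}^{\alpha,\sigma,\varphi}_\xi[f](\xi,T(y))\,e^{\sum_{k=0}^2\lambda_k(\xi)}$. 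Here $D\varphi$ is real and I treat $\sigma^{-1}$ as a scalar factor as the paper does. Moving $D\varphi\,\sigma^{-1}$ next to $B_T$ produces exactly $\beta_T(x)$ as defined in the statement.

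\textbf{Main obstacle.} The delicate point is the justification of Step 2 and of the hypothesis \eqref{equaHyp} at the point $\zeta=T(y)$. One must check that $H$ is $C^1$ on $J_a^b$; this follows from the membership assumption that $\xi\mapsto{}_aI(\xi,\zeta)$ is $C^1$. One must also check that the coordinate dependence of $D\varphi(\xi,\zeta)$ on the frozen parameter is the one used in \eqref{DirectComputations}, so that the derivative falls only on $\xi$. If the commutation of $A_T$ past $\sigma^{-1}$ becomes an issue because $\sigma$ is quaternionic, I would follow the paper's convention in \eqref{DirectComputations} and keep the factors in the order $B_T\,D\varphi\,\sigma^{-1}$.
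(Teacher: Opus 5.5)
Your proposal is correct and follows the paper's proof step for step: Corollary~\ref{cor14}, then pulling the constant $A_T$ (since $c=0$) out of ${}_aI^{1-\alpha,\sigma,\varphi}$, then the classical covariance of Proposition~\ref{conformableproperyofD} applied to $\xi\mapsto e^{\sum_k\lambda_k(\xi)}\,{}_aI^{1-\alpha,\sigma,\varphi}[f](\xi,T(y))$, and finally \eqref{DirectComputations} at $\zeta=T(y)$, keeping the factors in the order $B_T\,D\varphi\,\sigma^{-1}$. One small correction to your Step 1: the orders $\alpha_k$ may be complex, so the kernels of ${}_aI^{1-\alpha,\sigma,\varphi}$ are complex rather than real, and commuting $A_T$ past them uses that these complex scalars are central, a point the paper also takes for granted.
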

 \begin{proof}
Using  Corollary \ref{cor14}, Proposition \ref{conformableproperyofD} and Equation \eqref{DirectComputations}, we have that
\begin{align*}
&   {}_a^{\psi}\overline{\partial}_{x}^{\alpha , \sigma , \varphi, T}[A_T f ]( x, y)  e^{ \sum_{k=0}^2 \lambda_{k}\circ T(x)  } = 
{}^{\psi}\overline{\partial}_x   \left[  e^{\sum_{k=0}^2 \lambda_k \circ T(x)} {}_a I^{1-\alpha ,\sigma ,\varphi} [A_T  f](T(x), T(y)) \right]   
\\
 =&  
 {}^{\psi}\overline{\partial}_x   \left[   A_T  (x) e^{\sum_{k=0}^2 \lambda_k \circ T(x)} {}_a I^{1-\alpha ,\sigma ,\varphi} f(T(x), T(y)) \right]   \\
 = &
{}^{\psi}\overline{\partial}_x   \left[   A_T (x) \left( e^{\sum_{k=0}^2 \lambda_k(T(x)) } {}_a I^{1-\alpha ,\sigma ,\varphi} f(T(x) ,  T(y) ) \right)  \right]   
\\
 = &
  B_T (x){}^{\psi}\overline{\partial}_\xi \left( e^{\sum_{k=0}^2 \lambda_k(\xi) } {}_a I^{1-\alpha ,\sigma ,\varphi} f(\xi , T(y)  ) \right)      \\
 = &
  B_T (x)  D{\varphi} (\xi, T(y))     \sigma^{-1} {}_a^{\psi}\overline{\partial}_{\xi}^{ \alpha , \sigma , \varphi} [f] (\xi, T(y) ) e^{ \sum_{k=0}^2 \lambda_k (\xi)}.
\end{align*}
\end{proof}

\begin{corollary}\label{cor1001}
Let $T$ be  an affine  transformation on $\cA$  
 given by 
 \eqref{AffineR3}  and let $\Xi\subset \cA$ be such that  $T(\Xi)=J_a^b$.  
Then 
$A_T h\in {}^{\psi}_a{{\mathcal M}}^{\alpha,\sigma, \varphi, T} (\Xi)$
iff $h \circ T^{-1} \in {}^{\psi}_a{{\mathcal M}}^{\alpha,\sigma, \varphi} (J_a^b)$.
 \end{corollary}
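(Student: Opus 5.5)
The plan is to read the equivalence off Theorem \ref{Afinneprop}, using the fact that every factor multiplying the two operators there is nonvanishing. Set $g:=h\circ T^{-1}\in AC^1(J_a^b,\mathbb H)$, so that $h = g\circ T$. The first step is to be careful about what Definition \ref{Def5} asks of $A_T h$. Since $c=0$, $A_T=\psi_3\frac{\overline a}{|a|}\overline{\psi_3}$ is a constant quaternion. Membership $A_T h\in {}^{\psi}_a{\mathcal M}^{\alpha,\sigma,\varphi,T}(\Xi)$ then means that the function $A_T g$ on $J_a^b$ is $C^1$ and lies in the kernel of ${}_a^{\psi}\overline{\partial}_{x}^{\alpha,\sigma,\varphi,T}$. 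Following how Theorem \ref{Afinneprop} is written, this operator acts through the composite ${}_aI^{1-\alpha,\sigma,\varphi}[A_T g](T(x),T(y))$, with $x\in\Xi$.

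The second step applies Theorem \ref{Afinneprop} with $f=g$. This gives, for $x\in\Xi$ and $\xi=T(x)\in J_a^b$,
\[
{}_a^{\psi}\overline{\partial}_{x}^{\alpha,\sigma,\varphi,T}[A_T g](x,y)\, e^{\sum_k\lambda_k(\xi)} = B_T\, D\varphi(\xi,T(y))\,\sigma^{-1}\, {}_a^{\psi}\overline{\partial}_{\xi}^{\alpha,\sigma,\varphi}[g](\xi,T(y))\, e^{\sum_k\lambda_k(\xi)}.
\]
Each factor here is invertible:
\begin{itemize}
\item the exponential is a positive real number;
\item $B_T=|d^{-1}|\overline a$ is a nonzero quaternion constant, because $a\neq0$ for an affine map;
\item $D\varphi>0$ by the monotonicity hypothesis in Definition \ref{integraoper};
\item $\sigma^{-1}$ exists, as is already assumed in \eqref{equaHyp}.
\end{itemize}
Cancelling these factors, and using that $T:\Xi\to J_a^b$ is a bijection so that $\xi$ and $T(y)$ range over all of $J_a^b$, the left side vanishes identically iff ${}_a^{\psi}\overline{\partial}^{\alpha,\sigma,\varphi}[g]\equiv0$ on $J_a^b$.

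The third step matches the regularity requirements. For a constant $A_T$, $A_T g$ is $C^1$ iff $g$ is. The integral condition $x\mapsto {}_aI^{\alpha,\sigma,\varphi}g$ is $C^1$ does not change under left multiplication by the constant $A_T$, since ${}_aI$ is real-linear and acts componentwise, so ${}_aI[A_T g]=A_T\,{}_aI[g]$. With this, the two module memberships coincide, which proves the claim.

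The main obstacle is the second step. One must check that the cancellation is legitimate for quaternion factors: $B_T$ sits on the left and the operators take values in $\mathbb H$, so cancelling $B_T$ requires left-multiplying by $B_T^{-1}$, not commuting it past anything. One must also confirm that the regularity hypotheses of the two module definitions really coincide, not merely that the kernel conditions do. If the $C^1$ requirements turn out to differ, I would fall back to stating the equivalence at the level of kernels and note that the $AC^1$ and $C^1$ conditions are transported by the smooth affine map $T$.
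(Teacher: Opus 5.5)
Your argument is the paper's proof: set $f=h\circ T^{-1}$ in Theorem~\ref{Afinneprop} and cancel the nonvanishing factor $\beta_T(x)$ and the exponential. Your remarks on left-multiplying by $\beta_T(x)^{-1}$ and on $T$ being a bijection onto $J_a^b$ just make explicit what the paper leaves implicit.

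The one thing to drop is the claim in your third step that the two module memberships then coincide. Definition~\ref{Def5} asks for $A_T h\in AC^1(\Xi,\mathbb H)$ and $A_T(h\circ T^{-1})\in C^1(J_a^b,\mathbb H)$. By contrast, ${}^{\psi}_a\mathcal M^{\alpha,\sigma,\varphi}(J_a^b)$ asks for $h\circ T^{-1}\in AC^1(J_a^b,\mathbb H)$ and $x\mapsto {}_aI^{\alpha,\sigma,\varphi}[h\circ T^{-1}](x,y)\in C^1(\overline{J_a^b},\mathbb H)$. Showing that each of these conditions is unchanged by the constant $A_T$ does not connect one set of conditions to the other. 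So what you have actually proved is the equivalence of the kernel conditions, which is your stated fallback; the paper proves exactly the same, since it simply asserts ``precisely the condition''.
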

 \begin{proof} 
 Setting $f= h\circ T^{-1}$ in Theorem \ref{Afinneprop}, we obtain  
  \begin{align*}
   {}_a^{\psi}\overline{\partial}_{x}^{\alpha , \sigma , \varphi, T} [A_T  h\circ T^{-1}] ( x ,  y  )
   e^{ \sum_{k=0}^2 \lambda_{k}\circ T(x)  }  = 
 \beta_T (x) {}_a^{\psi}\overline{\partial}_{\xi}^{ \alpha , \sigma , \varphi} [ h\circ T^{-1}] (\xi, T(y) ) e^{ \sum_{k=0}^2 \lambda_k (\xi)}.    \end{align*}
Since $\beta_T(x)=B_T(x)\,D\varphi(\xi,T(y))\,\sigma^{-1}\neq 0$ and $e^{\sum_k\lambda_k(\xi)}\neq 0$, the right-hand side vanishes if and only if ${}_a^{\psi}\overline{\partial}_{\xi}^{\,\alpha,\sigma,\varphi}[h\circ T^{-1}](\xi,T(y))=0$, which is precisely the condition
$h\circ T^{-1}\in{}^{\psi}_a\mathcal{M}^{\alpha,\sigma,\varphi}(J_a^b)$.
 \end{proof}

\begin{remark}
The previous corollary is equivalent to the statement: 
$g\in {}^{\psi}_a{{\mathcal M}}^{\alpha,\sigma, \varphi, T} (\Xi)$ if and only if $(A_T)^{-1}  g \circ T^{-1}  \in {}^{\psi}_a{{\mathcal M}}^{\alpha,\sigma, \varphi} (J_a^b)$, which follows by setting $g = (A_T)^{-1} h$ and recalling that $A_T$ is a non-zero quaternionic constant. Therefore, the quaternionic right-linear  operator  
$$L_{_T}:  {}^{\psi}_a{{\mathcal M}}^{\alpha,\sigma, \varphi, T} (\Xi) \to {}^{\psi}_a{{\mathcal M}}^{\alpha,\sigma, \varphi} (J_a^b) , \hspace{0.5cm} L_{_T}[g]= (A_T)^{-1} g \circ T^{-1},   $$ 
is bijective, with inverse $L_{_T}^{-1}[f]= A_T  \  (f \circ T ) $  for all $ f\in  {}^{\psi}_a{{\mathcal M}}^{\alpha,\sigma, \varphi} (J_a^b)$.
 \\
Furthermore, if $T_1$ and $T_2$ are two affine transformations and $\Xi_1, \Xi_2\subset \cA$ are such that $T_i(\Xi_i)=J_a^b$ for $i=1,2$, then each
 $L_{_{T_i}}:  {}^{\psi}_a{{\mathcal M}}^{\alpha,\sigma, \varphi, T_i} (\Xi_i) \to {}^{\psi}_a{{\mathcal M}}^{\alpha,\sigma, \varphi} (J_a^b) $ 
is a quaternionic right-linear bijection, and the composition  
$$L_{_{T_1,T_2}} := L_{_{T_2}}^{-1} \circ L_{_{T_1}}:  {}^{\psi}_a{{\mathcal M}}^{\alpha,\sigma, \varphi, T_1} (\Xi_1) \to  {}^{\psi}_a{{\mathcal M}}^{\alpha,\sigma, \varphi, T_2} (\Xi_2)$$  
is likewise a quaternionic right-linear bijection. 
\end{remark}
 
\begin{remark}
We now use the theory of categories and functors, well known in abstract algebra, to give an interpretation of the covariance and affine invariant properties established in Theorem \ref{Afinneprop} and Corollary \ref{cor1001}.

\begin{enumerate}
\item 
By $\mathcal C_a^b$  we denote the category induced by $J_a^b$. The family of objects in  $\mathcal C_a^b$ is 
\[  \{ \Xi\subset \cA \ \mid \ \textrm{ there exists an affine transformation $T$ such that } T(\Xi)=J_a^b\}.\] 
Given two objects $\Xi_1$ and $\Xi_2$, the morphism is given by $ T_2^{-1}\circ T_1: \Xi_1 \to \Xi_2$, where $T_i(\Xi_i)=J_a^b$ for $i=1,2$. 

\item The category $\mathcal M_a^b$ is formed by the family of objects 
$$\{ {}^{\psi}_a{{\mathcal M}}^{\alpha,\sigma, \varphi, T} (\Xi)  \mid\  T  (\Xi)  =J_a^b \}$$
and given two objects   ${}^{\psi}_a{{\mathcal M}}^{\alpha,\sigma, \varphi, T_1} (\Xi_1)$ and 
 ${}^{\psi}_a{{\mathcal M}}^{\alpha,\sigma, \varphi, T_2} (\Xi_2)$, its morphism  is $L_{_{T_1,T_2}}$, according  to the previous remark.
\end{enumerate}
Therefore, the assignments 
\begin{align*} \Xi  & \  \mapsto \  {}^{\psi}_a{{\mathcal M}}^{\alpha,\sigma, \varphi, T} (\Xi) ,\hspace{0.5cm}  T(\Xi) = J_a^b  , \\
T_2^{-1}\circ T_1  & \ \mapsto L_{_{T_1,T_2}} := L_{_{T_2}}^{-1} \circ L_{_{T_1}},
\end{align*}
define a covariant functor from $\mathcal C_a^b$ to $\mathcal M_a^b$.
\end{remark}

\begin{remark}
 Operator    $ {}_a^{\psi}\overline{\partial}_{r}^{\alpha , \sigma , \varphi} $
 studied in \cite{GPBN}, together with the right version of generalized Cauchy-Riemann type operator $ {} ^{\psi}\overline{\partial}_{r} $, can be used to develop the function theory associated to the right operator    
\begin{align*}
 {}_a^{\psi}\overline{\partial}_{x,r}^{\alpha , \sigma , \varphi, T} [g ] (x,y) 
& := 
    {}_a I^{1-\alpha ,\sigma ,\varphi }[g]( T(x), T (y))  {}^{\psi} \overline{\partial}_{x,r} \left[\sum_{k=0}^2 \lambda_k \circ T(x)\right]  
    \\
    & \qquad +   {}^{\psi}\overline{\partial}_{x,r} \bigg( {}_a I^{1-\alpha ,\sigma ,\varphi }[g ]( T(x), T (y))  \bigg),
\end{align*}
for all $g\in AC^{1}(J_a^b,\mathbb H)$. An affine covariance type property of $ {}_a^{\psi}\overline{\partial}_r^{\alpha , \sigma , \varphi}$, analogous to Theorem \ref{Afinneprop}, can then be established.
\end{remark}

\begin{remark}[On the restriction to affine transformations]
Throughout this section, the covariance property of Theorem~\ref{Afinneprop} has been established only for affine
transformations ($c=0$). This restriction reflects a genuine obstruction specific to the fractional setting. 
In the classical theory, identity~\eqref{invarianCauchy} holds for the full M\"{o}bius group.
When $c\neq0$, $A_T=A_T(x)$ is non-constant. Even so, it enters the computation only through the classical product rule, which handles the
differentiation of $A_T(x)\cdot f(T(x))$ exactly, via the chain rule $\partial_x[h(T(x))]=T'(x)\cdot h'(T(x))$. 
In the fractional setting, extending the argument requires differentiating
$$A_T(x)\cdot{}_aI^{1-\alpha,\sigma,\varphi}[f](T(x),T(y)),$$ 
where $A_T$ sits outside a fractional integral whose limits depend on $x$.
Since $A_T$ is non-constant when $c\neq0$, it cannot be factored through
the integral. The resulting term
$${}^{\psi}\overline{\partial}_x[A_T(x)]\cdot{}_aI^{1-\alpha,\sigma,\varphi}[f](T(x),T(y))$$
does not simplify: unlike the classical case, fractional derivatives do
not satisfy a simple product rule, and Leibniz-type formulas for
fractional integrals take the form of infinite series (\cite{SKM}). The
affine case $c=0$ is precisely the one where $A_T$ is a quaternionic constant (see Corollary~\ref{cor14}), so this obstruction disappears entirely.

\end{remark}

\begin{remark}[On the conformal weight factor for general M\"{o}bius transformations]
The results of this paper naturally raise the question of what form the conformal weight factor $A_T$ and the covariance identity of
Theorem~\ref{Afinneprop} take when $c\neq0$. One expects the identity to become
$$
  {}_a^{\psi}\overline{\partial}_{x}^{\,\alpha,\sigma,\varphi,T}[A_Tf](x,y)\,
  e^{\sum_{k=0}^{2}\lambda_k\circ T(x)}
  = \beta_T(x)\,
    {}_a^{\psi}\overline{\partial}_{\xi}^{\,\alpha,\sigma,\varphi}[f](\xi,T(y))\,
    e^{\sum_{k=0}^{2}\lambda_k(\xi)}
    + R_T(x,y),
$$
where $R_T$ is a remainder generated by the fractional differentiation
of $A_T$ itself, vanishing when $c=0$. 
For the general case, the fractional Leibniz formula (\cite{SKM})
$$
  {}_{a_i}D^{\alpha_i}[A_T f](t)
  = \sum_{k=0}^{\infty}\binom{\alpha_i}{k}\,
    {}_{a_i}D^{\alpha_i-k}[A_T](t)\,D^k[f](t)
$$
provides a systematic method to compute $R_T$: the $k=0$ term is exactly the "naive" covariance term appearing in Theorem~\ref{Afinneprop}, while the terms $k\geq1$ constitute the remainder. Whether $A_T\cdot(f\circ T)$ remains in the module ${}^{\psi}_a\mathcal{M}^{\alpha,\sigma,\varphi}(J_a^b)$ for the full M\"{o}bius group, and under what conditions the series for $R_T$ can be controlled, are open problems left for future work.
\end{remark}

\section*{Statements and Declarations}
\subsection*{Funding} This work was partially supported by Instituto Polit\'ecnico Nacional (grant numbers IND-2026-0491, IND-2026-0101) and SECIHTI (grant number 1179388).
\subsection*{Competing Interests} No competing interests appear to influence the work reported in this paper are disclosed by the authors
\subsection*{Author contributions} IPB and JOGC conceived the study and writing of the paper. JBR and HMN oversaw the project progress. All authors provided critical feedback and helped shape the research, analysis and manuscript. 
\subsection*{ORCID}
\noindent
Isidro Paulino-Basurto: https://orcid.org/0009-0003-4550-4851
\\
Jos\'e Oscar Gonz\'alez-Cervantes: https://orcid.org/0000-0003-4835-5436
\\
Juan Bory-Reyes: https://orcid.org/0000-0002-7004-1794
\\
Hung Manh Nguyen: https://orcid.org/0009-0004-6503-6693


\begin{thebibliography}{99}

\bibitem{GB-1}  Bory-Reyes. J., González-Cervantes, J. O. \textit{A quaternionic proportional fractional  Fueter-type operator calculus}. Bol. Soc. Mat. Mex. (2025) 31:137.

\bibitem{GB-2} Gonz\'alez Cervantes, J. O., Bory-Reyes, J. \textit{A quaternionic fractional Borel-Pompeiu type formula}, Fractal, Vol. 30, No. 1 (2022) 2250013 (15 pages).

\bibitem{GPBN}   González-Cervantes, J. O., Paulino-Basurto, I. , Bory-Reyes, J. , Nguyen. H. M.  \textit{The Borel-Pompieu formula involving proportional fractional $\psi$-Cauchy-Riemann operators}. Adv. Appl. Clifford Algebras (2026) 36:1.

\bibitem{GN1} G\"urlebeck K., Nguyen H.M. \textit{On $\psi$-hyperholomorphic functions in $\mathbb R^3$}. AIP Conference Proceedings. 1558, 496-501, 2013.

\bibitem{GN2} G\"urlebeck K., Nguyen H.M. \textit{On $\psi$-hyperholomorphic functions and a decomposition of harmonics}. Trends in Mathematics/Hypercomplex Analysis: New perspectives and applications, ed. Birkhauser, Basel, 181-189, 2014.

\bibitem{GNL} G\"{u}rlebeck K., Nguyen H. M., Legatiuk D. \textit{$\psi$-Hyperholomorphic functions and a {K}olosov--{M}uskhelishvili formula}. Math.\ Methods Appl.\ Sci., 38(18):5114--5123, 2015.

\bibitem{JAAl} Jarad, F., Abdeljawad, T., Alzabut, J. \textit{Generalized fractional derivatives generated by a class of local proportional derivatives}. Eur. Phys. J. Spec. Top. 226, 3457-3471, 2017.

\bibitem{JAA} Jarad, F., Alqudah, M. A., Abdeljawad, T. \textit{On more generalized form of proportional fractional operators}. Open Mathematics, vol. 18, no. 1, 167-176, 2020.

\bibitem{JUAB} Jarad, F., Ugurlu, E., Abdeljawad, T., Baleanu, D. \textit{On a new class of fractional operators}. Adv. Difference Equ. Paper No. 247, 2017, 16 pp.

\bibitem{LN} Legatiuk, D., Nguyen, H. M. \textit{Improved convergence results for the finite element method with holomorphic functions}.
Adv. Appl. Clifford Algebras, 24(4):1077--1092, 2014.

\bibitem{LWP} Legatiuk, D., Weisz-Patrault, D. \textit{Coupling of complex function theory and finite element method for crack propagation through energetic formulation: conformal mapping approach and reduction to a {R}iemann--{H}ilbert problem}. Comput. Methods Funct. Theory, 22:535--557, 2022.

\bibitem{SKM} Samko, S.~G., Kilbas, A.~A., Marichev, O.~I. \textit{Fractional Integrals and Derivatives: Theory and Applications}. Gordon and Breach Science Publishers, Yverdon, 1993.

\bibitem{S} Shapiro, M. \textit{Quaternionic analysis and some conventional theories}. In: Alpay, D. (ed.) Operator Theory, 1423-1446. Springer, Basel, 2015.

\bibitem{SV1} Shapiro, M, Vasilevski, N. L. \textit{Quaternionic $\psi$-monogenic functions, singular operators and boundary value problems. I. $\psi$-Hyperholomorphy function theory.}  Compl. Var. Theory Appl. 27, 17-46, 1995.

\bibitem{SV2} Shapiro, M., Vasilevski, N. L. \textit{Quaternionic $\psi$-hyperholomorphic functions, singular operators  and boundary value problems II. Algebras of singular integral operators and Riemann type boundary value problems.} Compl. Var. Theory Appl. 27, 67-96, 1995.
\end{thebibliography}
\end{document}